\newtheorem{theorem}{Theorem}[section]
\newtheorem{corollary}[theorem]{Corollary}
\newtheorem{lemma}[theorem]{Lemma}
\theoremstyle{definition}
\newtheorem{definition}[theorem]{Definition}
\newtheorem{example}[theorem]{Example}
\newtheorem{remark}[theorem]{Remark}
\def\unprotectedboldentry#1{\textcolor{Red}{\textbf{#1}}} 
\def\boldentry{\protect\unprotectedboldentry} 
\newcommand{\tikztableau}[2][scale=0.6,every node/.style={font=\small}]{
 \def\newtableau{#2} 
 \begin{array}{c} 
 \begin{tikzpicture}[#1] 
 \coordinate (x) at (-0.5,0.5); 
 \coordinate (y) at (-0.5,0.5); 
 \foreach \row in \newtableau { 
 \coordinate (x) at ($(x)-(0,1)$); 
 \coordinate (y) at (x); 
 \foreach \entry in \row { 
 \ifthenelse{\equal{\entry}{X}} 
 { 
 \node (y) at ($(y) + (1,0)$) {}; 
 \fill[color=lightgray] ($(y)-(0.5,0.5)$) rectangle +(1,1); 
 \draw[color=black] ($(y)-(0.5,0.5)$) rectangle +(1,1); 
 } 
 { 
 \ifthenelse{\equal{\entry}{\boldentry X}} 
 { 
 \node (y) at ($(y) + (1,0)$) {}; 
 \fill[color=darkgray] ($(y)-(0.5,0.5)$) rectangle +(1,1); 
 \draw ($(y)-(0.5,0.5)$) rectangle +(1,1); 
 } 
 { 
 \node (y) at ($(y) + (1,0)$) {\entry}; 
 \draw ($(y)-(0.5,0.5)$) rectangle +(1,1);} } 
 }} 
 \end{tikzpicture} 
 \end{array}} 
\newcommand{\tikztableausmall}[1]{\tikztableau[scale=0.45,every node/.style={font=\rm\small}]{#1}}
\newcommand{\N}{\mathbb{N}}
\newcommand{\Max}{Max_{\succeq}}
\newcommand{\lx}{\mathfrak{L}}
\title{Lexical tableaux and quasisymmetric functions}
\author[J. M. Campbell]{John M. Campbell}\address{Department of Mathematics and Statistics, Dalhousie University, Halifax, NS}\email{jh241966@dal.ca}
\author[S. Daugherty]{Spencer Daugherty}\address{Department of Mathematics, University of Colorado Boulder, Boulder, CO}\email{spencer.daugherty@colorado.edu}
\begin{document}

\maketitle

\begin{abstract}
 There is a natural bijection between standard immaculate tableaux of composition shape $\alpha \vDash n$ and length $\ell(\alpha) = 
 k$ and the $ \left\{ \begin{smallmatrix} n \\ k \end{smallmatrix} \right\} $ set-partitions of $\{ 1, 2, \ldots, n \}$ into $k$ blocks, for the 
 Stirling number $ \left\{ \begin{smallmatrix} n \\ k \end{smallmatrix} \right\} $ of the second kind. We introduce a family of tableaux that 
 we refer to as \emph{lexical tableaux} that generalize immaculate tableaux in such a way that there is a bijection between standard 
 lexical tableaux of shape $\alpha \vDash n$ and length $\ell(\alpha) = k$ and the $ \left[ \begin{smallmatrix} n \\ k \end{smallmatrix} 
 \right] $ permutations on $\{ 1, 2, \ldots, n \}$ with $k$ disjoint cycles. In addition to the entries in the first column strictly increasing, the defining characteristic of lexical tableaux is that the word $w$ formed by the consecutive labels in any row is 
 the lexicographically smallest out of all cyclic permutations of $w$. This includes weakly increasing words, and thus  lexical tableaux provide a natural generalization of immaculate tableaux. Extending this generalization, we introduce a pair of dual bases of the Hopf algebras 
 $\textsf{QSym}$ and $\textsf{NSym}$ defined in terms of lexical tableaux. We present two expansions of these bases, involving the monomial and fundamental bases (or,  dually,  the ribbon and complete homogeneous bases), using Kostka coefficient analogues and coefficients derived from standard lexical tableaux. 
\end{abstract}

\vspace{0.1in}

\noindent {\footnotesize{\emph{MSC:} 05E05, 05E10}}

\vspace{0.1in}

\noindent {\footnotesize{\emph{Keywords:} tableau, quasisymmetric function, symmetric function, noncommutative symmetric function, symmetric group, tabloid, permutation, integer partition, integer composition, Hopf algebra, Young tableau, immaculate tableau}}

\section{Introduction}
 An \emph{integer partition} is a weakly decreasing and finite tuple $\lambda = (\lambda_1, \lambda_2, \ldots, \lambda_{\ell(\lambda)})$ 
 of positive integers, where $|\lambda|$ denotes the sum of the parts of $\lambda$, and we write $\lambda \vdash n$ if $|\lambda| = 
 n$. Similarly, an \emph{integer composition} $\alpha = (\alpha_1, \alpha_2, \ldots, \alpha_{\ell(\alpha)})$ is a finite tuple of positive 
 integers, where we write $|\alpha|$ in place of the   sum of the entries of $\alpha$, and we write $\alpha \vDash n$ if $|\alpha| = n$. A  
 \emph{tableau}, for the purposes of this paper,   may be understood as a two-dimensional arrangement of (labeled or unlabelled) cells 
 that are positioned into left-justified rows,   with the \emph{shape} of a tableau $T$ forming an integer composition $\alpha$ such that  
 the number of cells in the $i^{\text{th}}$   row of $T$ (from the bottom according to the so-called French convention) is equal to  
 $\alpha_i$ for $i \in \{ 1, 2, \ldots, \ell(\alpha) \}$.   Tableaux and tableaux-like objects are of fundamental importance in the representation  
 theory of the symmetric group and many   related areas of algebraic combinatorics. \emph{Young tableaux}, in particular, are especially  
 significant in the study of and application of   symmetric group representations. A family of Young-like tableaux that arose in the  
 construction of a noncommutative analogue of the   Schur symmetric functions are the \emph{immaculate tableaux} introduced in a  
 seminal paper by Berg et  al.~\cite{BergBergeronSaliolaSerranoZabrocki2014}.  Our explorations based on the combinatorics of  
 immaculate tableaux have led us to   generalize such tableaux via a family of tableaux that we refer to as \emph{lexical tableaux} and that  
 we apply to introduce new bases of   the Hopf algebras $\textsf{QSym}$ and $\textsf{NSym}$ (reviewed in Section  
 \ref{sectionPreliminaries} below) of quasisymmetric  functions and of noncommutative symmetric functions. 

 One of the most significant results in the representation theory of the symmetric group is that the isomorphism classes of the simple 
 $\mathbb{C}S_n$-modules are in bijection with partitions $\lambda \vdash n$, and, moreover, that the dimension and multiplicity of 
 the irreducible $\mathbb{C}S_n$-module corresponding to $\lambda$ is equal to the number $f^{\lambda}$ of standard Young tableaux 
 of shape $\lambda$. This raises questions as to how similar properties could be obtained with the use of composition tableaux in place 
 of partition tableaux, and immaculate tableaux can be thought of as arising in this way. The interest in the study of combinatorial 
 properties associated with standard immaculate tableaux is evidenced by Gao and Yang's bijective proof of the hook-length formula for 
 standard immaculate tableaux \cite{GaoYang2016} together with Sun and Hu's probabilistic method for determining the number of 
 standard immaculate tableaux of a given shape \cite{SunHu2018}. 

 Since evaluations of finite summations involving $f^{\lambda}$ often arise in the context of applications of Young tableaux, this raises 
 questions as to what would be appropriate as analogues of such evaluations involving standard immaculate tableaux, 
 letting $g^{\alpha}$ denote the number of standard immaculate tableaux 
 (reviewed in Section \ref{sectionPreliminaries} below) of a composition shape $\alpha$.
 In this direction, 
 the summation
\begin{equation}\label{krows}
 \sum_{\substack{ \lambda \vdash n \\ \ell(\lambda)=k}} f^{\lambda} = \text{$\#$ of Young tableaux with $n$ cells and $k$ rows}, 
\end{equation} 
 which gives rise to a number triangle indexed in the Online Encyclopedia of Integer Sequences as {\tt A047884}, has led us to 
 experimentally discover, using the OEIS, a property (given in Theorem \ref{thm:snk} below) 
 concerning a corresponding sum for immaculate tableaux
 given by replacing $f^{\lambda}$ with $g^{\alpha}$ and summing over compositions $\alpha \vDash n$
 of a fixed length $k$. 
 This, in turn, has led us to construct a new family of immaculate-like tableaux.
 
 Given a construction involving all possible permutations of a set of objects, it is natural to consider a corresponding construction whereby 
 the permutations involved are required to be cyclic. For example, the abelian complexity function on infinite words counts subwords up 
 to \emph{all} possible permutations of characters, whereas the cyclic complexity function introduced in 2017 
 \cite{CassaigneFiciSciortinoZamboni2017} counts subwords up to \emph{cyclic} permutations of characters. The way our definition of a 
 lexical tableau, as given in Section \ref{sectionLexical} below, 
 relates to that of an immaculate tableau may be seen by analogy with how the definition of a cyclic complexity function 
 relates to that of an abelian complexity function. Similarly, the way the \emph{cyclic quasisymmetric functions} introduced by Adin et 
 al.\ \cite{AdinGesselReinerRoichman2021} are defined via an invariance property associated with cyclic permutations, relative to the 
 corresponding invariance property for symmetric functions holding for \emph{all} possible permutations, further illustrates how the 
 definition of a lexical tableau provides a natural generalization of immaculate tableaux. Indeed, our construction of lexical tableaux 
 makes use of cyclic shifts by direct analogy with the work of Adin et al.\ \cite{AdinGesselReinerRoichman2021}. As the term \emph{lexical 
 tableau} suggests, there is a close connection between the study of such tableau and the field combinatorics on words. 

 Given a property associated with Schur or immaculate functions, by deriving an analogous identity using lexical tableaux, this, ideally, 
 could help to shed light on the use of new methods that could be applied toward unsolved problems related to the Schur and immaculate 
 bases and representation-theoretic uses of these bases. The problem of extending immaculate and dual immaculate functions using 
 the combinatorial objects involved in the construction of $\{ \mathfrak{S}_{\alpha} \}_{\alpha}$ and its dual is motivated by much in the way 
 of past research on immaculate and dual immaculate functions, including research on the indecomposable modules for the dual 
 immaculate basis \cite{BergBergeronSaliolaSerranoZabrocki2015}, Pieri rules for dual immaculate functions and generalizations 
 \cite{BergeronSanchezOrtegaZabrocki2016,Li2018,NieseSundaramvanWilligenburgWang2024}, multiplicative structures of the immaculate 
 basis \cite{BergBergeronSaliolaSerranoZabrocki2017,Li2018}, the expansion of dual immaculate functions into {Y}oung quasisymmetric 
 {S}chur functions \cite{AllenHallamMason2018}, noncommutative {Bell} polynomials \cite{NovelliThibonToumazet2018}, 
 a generalization of the dual immaculate basis to the polynomial algebra \cite{MasonSearles2021}, the immaculate inverse {K}ostka 
 matrix \cite{LoehrNiese2021}, and a generalization of dual immaculate functions using partially commutative 
 variables \cite{Daugherty2024}.
 
  We introduce  lexical tableaux in
  Section \ref{sectionLexical}, and, in Section \ref{subsectionEnumeration}, we present basic enumerative properties of lexical tableaux.   In Section \ref{sectionqsym}, we define the dual lexical functions in $\textsf{QSym}$ and establish via their monomial expansions that they constitute a basis. This expansion uses an analogue of Kostka coefficients that count certain lexical tableaux. We define the lexical functions in $\textsf{NSym}$ dually. Here, we give a positive expansion of the ribbon noncommutative symmetric functions into the lexical functions. We also present results on the antipodes of lexical basis elements. To close, we present various open problems and directions for future research. 

\section{Preliminaries}\label{sectionPreliminaries}
 We highlight Macdonald's text \cite{Macdonald1995} as the usual monograph on symmetric functions. A review of preliminaries on 
 symmetric functions, as below, is required for our purposes. 

 The rings and algebras considered in this paper will be over $\mathbb{Q}$ for convenience and by convention. Letting the symmetric 
 group $S_n$ act on the polynomial ring $\mathbb{Q}[x_1, x_2, \ldots, x_n]$ by permuting the variables, we let
\begin{equation}\label{Symsuper}
 \textsf{Sym}^{(n)} 
 = \mathbb{Q}[x_1, x_2, 
 \ldots, x_n]^{S_{n}} 
\end{equation} 
 denote the polynomial subring given by polynomials invariant under the action of $S_n$. By letting $\textsf{Sym}^{(n)}_{k}$ consist 
 of the zero polynomial and the homogeneous symmetric polynomials that are of degree $k$, we obtain a graded ring
 structure on \eqref{Symsuper}, with $$ \textsf{Sym}^{(n)} 
 = \bigoplus_{k=0}^{\infty} \textsf{Sym}^{(n)}_{k}.$$ We then take an inverse limit
\begin{equation*} 
 \textsf{Sym}_{k} := \lim_{\substack{\longleftarrow\\ n }} \textsf{Sym}_{k}^{(n)}, 
\end{equation*}
 referring to Macdonald's text for details \cite[p.\ 18]{Macdonald1995}. This allows us to define the \emph{algebra of symmetric 
 functions} $\textsf{Sym}$ so that 
\begin{equation}\label{gradedSym}
 \textsf{Sym} := \bigoplus_{k=0}^{\infty} \textsf{Sym}_{k}.
\end{equation}

 For a given integer partition $\lambda$, we set
\begin{equation*}
 m_{\lambda} = \sum_{ i_{1} < i_{2} < \cdots < i_{\ell(\lambda)} } 
 x_{i_{1}}^{\lambda_{1}} x_{i_{2}}^{\lambda_{2}} \cdots 
 x_{i_{\ell(\lambda)}}^{\lambda_{\ell(\lambda)}}.
\end{equation*}
 Letting $\mathcal{P}$ denote the set of all integer partitions, we may thus define the \emph{monomial basis} of $\textsf{Sym}$ as 
 $\{ m_{\lambda} \}_{\lambda \in \mathcal{P}}$. By then setting $e_0 = 1$ and $ e_{r} = m_{(1^r)} $ for $r > 0$, we then set 
 $e_{\lambda} = e_{\lambda_1} e_{\lambda_2} \cdots e_{\lambda_{\ell(\lambda)}}$ for $\lambda \in \mathcal{P}$, giving us the 
 \emph{elementary basis} $\{ e_{\lambda} \}_{\lambda \in \mathcal{P}}$ of $\textsf{Sym}$. By then setting $h_0 = 1$ and $ h_r = 
 \sum_{\lambda \vdash r} m_{\lambda} $ for $r > 0$, we then set $h_{\lambda} = h_{\lambda_{1}} h_{\lambda_{2}} \cdots 
 h_{\lambda_{\ell(\lambda)}}$ for an integer partition $\lambda$, giving rise to the \emph{complete homogeneous basis} $\{ 
 h_{\lambda} \}_{\lambda \in \mathcal{P}}$ of $\textsf{Sym}$. By then setting $p_r = m_{(r)}$ for $r \geq 1$, we then set $p_{\lambda} = 
 p_{\lambda_{1}} p_{\lambda_{2}} \cdots p_{\lambda_{\ell(\lambda)}}$, and this gives rise to the \emph{power sum basis} $\{ p_{\lambda} 
 \}_{\lambda \in \mathcal{P}}$ of $\textsf{Sym}$. 

 A \emph{semistandard Young tableau} is a tableau of a given partition shape $\lambda$ with each cell labeled with a positive integer 
 and with weakly increasing rows and strictly increasing columns. The \emph{content} of a SSYT $T$ is the finite tuple $t = (t_{1}, t_{2}, 
 \ldots, t_{\ell(t)})$ such that $\ell(t)$ is the maximal label in $T$ and such that the number of labels equal to $i$ is $t_i$ for $i \in \{ 1, 2, 
 \ldots, \ell(t) \}$. We let the \emph{Kostka coefficient} $K_{\lambda, \mu}$ be defined as the number of SSYTs of shape $\lambda$ 
 and content $\mu$. For the case whereby $\mu = (1^{|\lambda|})$, a SSYT of shape $\lambda$ and content $\mu$ is said to be 
 \emph{standard}, and, as above, the number of standard Young tableaux of shape $\lambda$ is denoted as $f^{\lambda}$. 

\begin{example}\label{exampleSY}
 For $\lambda = (2, 2, 1)$, we find that there are $f^{\lambda} = 5$
 standard Young tableaux of the specified shape, as below: 
 $$ \ytableausetup {centertableaux,boxsize=normal,notabloids}
 \begin{ytableau} 3 \\ 2 & 5 \\ 1 & 4 \end{ytableau} \ \ \ \ \ \ \ 
 \ytableausetup {centertableaux,boxsize=normal,notabloids}
 \begin{ytableau} 4 \\ 2 & 5 \\ 1 & 3 \end{ytableau} \ \ \ \ \ \ \ \ytableausetup {centertableaux,boxsize=normal,notabloids}
 \begin{ytableau} 4 \\ 3 & 5 \\ 1 & 2 \end{ytableau} \ \ \ \ \ \ \ \ytableausetup {centertableaux,boxsize=normal,notabloids}
 \begin{ytableau} 5 \\ 2 & 4 \\ 1 & 3 \end{ytableau} \ \ \ \ \ \ \ \ytableausetup {centertableaux,boxsize=normal,notabloids}
 \begin{ytableau} 5 \\ 3 & 4 \\ 1 & 2 \end{ytableau}. $$ 
\end{example}

 This allows us to define the \emph{Schur basis} $\{ s_{\lambda} \}_{\lambda \in \mathcal{P}}$ so that $ h_{\mu} = \sum_{\lambda} 
 K_{\lambda, \mu} s_{\lambda}$. Equivalently, Schur symmetric functions may be defined according to the \emph{Pieri rule} such that
\begin{equation}\label{originalPieri}
 s_{\lambda} h_{r}
 = \sum_{\mu} s_{\mu}, 
\end{equation}
 where the sum in \eqref{originalPieri} is over all partitions $\mu$
 such that the diagram
 for $\mu$ can be obtained
 from that of $\lambda$ by adding $r$ boxes to the diagram of $\lambda$ (so that the added boxes are adjacent to the border of the diagram of $\lambda$ and are otherwise outside of this diagram) and in such a way so that no two boxes are added to the same column. 

 For an integer composition $\alpha$, we write
\begin{equation}\label{quasimonomial}
 M_{\alpha} = \sum_{i_{1} < i_{2} < \cdots < i_{\ell(\alpha)}}
 x_{i_{1}}^{\alpha_{1}} 
 x_{i_{2}}^{\alpha_{2}} \cdots 
x_{i_{\ell(\alpha)}}^{\alpha_{\ell(\alpha)}}. 
\end{equation}
 By then setting $\textsf{QSym}_{k} := \mathscr{L}\{ M_{\alpha} : \alpha \vDash k \}$, we then form a graded algebra, by analogy with 
 \eqref{gradedSym}, by setting 
\begin{equation}\label{gradedQSym}
 \textsf{QSym} :=
 \bigoplus_{k=0}^{\infty} 
 \textsf{QSym}_{k}.
\end{equation}
 The algebra in \eqref{gradedQSym} is referred to as the \emph{algebra of quasisymmetric functions}, and elements in the basis $\{ 
 M_{\alpha} \}_{\alpha \in \mathcal{C}}$ of $\textsf{QSym}$ are referred to as \emph{monomial quasisymmetric functions}. For 
 compositions $\alpha$ and $\beta$, we write $\alpha \succeq \beta$ if $\alpha$ can be obtained by adding together consecutive parts 
 of $\beta$. This allows us to define the \emph{fundamental basis} $\{ F_{\alpha} \}_{\alpha \in \mathcal{C}}$ of $\textsf{QSym}$ so that 
 $ F_{\alpha} = \sum_{\alpha \succeq \beta} M_{\beta}$. Letting $\text{sort}(\alpha)$ denote the integer partition obtained from $\alpha$ 
 by sorting the entries of $\alpha$, we find that $\textsf{Sym}$ is contained in $\textsf{QSym}$, according to the relation $$ m_{\lambda} 
 = \sum_{ \substack{\alpha \in \mathcal{C} \\ \text{sort}(\alpha) = \lambda} } M_{\alpha}. $$ Quasisymmetric functions were introduced by 
 Gessel in 1984 \cite{Gessel1984} and provide deep and major areas of study within algebraic combinatorics. An equivalent version of the 
 algebra $\textsf{NSym}$ dual to $\textsf{QSym}$ was introduced by Gelfand et al.\ in 1995 
 \cite{GelfandKrobLascouxLeclercRetakhThibon1995}, and the importance of $\textsf{NSym}$ within algebraic combinatorics is much like 
 that of its dual $\textsf{QSym}$. Setting $\textsf{NSym}_{k} := \mathscr{L}\{ H_{\alpha} : \alpha \vDash k \}$, letting $H_{\alpha}$ be seen 
 as a variable, we form the graded algebra $$ \textsf{NSym} := \bigoplus_{k=0}^{\infty} \textsf{NSym}_k $$ endowed with the 
 multiplicative operation such that $H_{\alpha} H_{\beta} = H_{\alpha \cdot \beta}$ for the concatenation $\alpha \cdot \beta$ of 
 $\alpha$ and $\beta$. The elementary basis $\{ E_{\alpha} \}_{\alpha \in \mathcal{C}}$ of $\textsf{NSym}$ may then be defined 
 according to the recursion $ E_{n} = \sum_{i=1}^{n} (-1)^{n+1} H_{i} E_{n-i}$, with $E_{\alpha} = E_{\alpha_1} E_{\alpha_2} \cdots 
 E_{\alpha_{\ell(\alpha)}}$, and the ribbon basis $ \{ R_{\alpha} \}_{\alpha \in \mathcal{C}} $ may be defined so that $ R_{\alpha} 
 = \sum_{\beta \succeq \alpha} (-1)^{\ell(\alpha) - \ell(\beta)} H_{\beta}$. 

 The duality between $\textsf{NSym}$ and $\textsf{QSym}$ may be demonstrated using the bases defined above, according to the 
 bilinear pairing $\langle \cdot, \cdot \rangle\colon \textsf{NSym} \times \textsf{QSym} \to \mathbb{Q}$ such that $\langle H_{\alpha}, 
 M_{\beta} \rangle = \delta_{\alpha, \beta}$ for the Kronecker delta function $\delta_{\cdot, \cdot}$, or, equivalently, such that $\langle 
 R_{\alpha}, F_{\beta} \rangle = \delta_{\alpha, \beta}$. 

 The \emph{immaculate basis} of $\textsf{NSym}$ may be defined by analogy with the Pieri rule in \eqref{originalPieri}, with 
\begin{equation}\label{immaculatePieri}
 \mathfrak{S}_{\alpha} H_{s} = \sum_{\beta} \mathfrak{S}_{\beta}, 
\end{equation}
 where the sum in \eqref{immaculatePieri} is over all compositions $\beta \vDash |\alpha| + s$
 that differ from $\alpha$ by an immaculate horizontal strip, i.e., so that 
 $\alpha_{j} \leq \beta_{j}$
 for all $j \in \{ 1, 2, \ldots, \ell(\alpha) \}$, and $\ell(\beta) \leq \ell(\alpha) + 1$ \cite{BergBergeronSaliolaSerranoZabrocki2014}. This basis also has a combinatorial interpretation in terms of  tableaux. An \emph{immaculate tableau} of shape $\alpha$ and content $\beta$ is a tableau $T$ of the specified shape such that the number of 
 labels in $T$ equal to $i$ is $\beta_i$ for $i \in \{ 1, 2, \ldots, \ell(\beta) \}$ and such that the first column of $T$ is strictly increasing and 
 such that the rows of $T$ are weakly increasing. Let $K_{\alpha, \beta}^{\mathfrak{S}}$ denote the number immaculate tableaux of 
 shape $\alpha$ and content $\beta$. For the case whereby $\beta = (1^{|\alpha|})$, an immaculate tableau of shape $\alpha$ and content 
 $\beta$ is said to be \emph{standard}, and we let $g^{\alpha}$ denote the number of standard immaculate tableaux of shape $\alpha$. 

\begin{example}
 In contrast to Example \ref{exampleSY}, for $\alpha = (1,2,2)$, we find that there are $g^{\alpha} = 3$ standard immaculate tableaux of 
 the given shape, as below: $$ \ytableausetup {centertableaux,boxsize=normal,notabloids}
 \begin{ytableau} 3 & 5 \\ 2 & 4 \\ 1 \end{ytableau} \ \ \ \ \ \ \ 
 \ytableausetup {centertableaux,boxsize=normal,notabloids}
 \begin{ytableau} 3 & 4 \\
 2 & 5 \\ 1 \end{ytableau}
 \ \ \ \ \ \ \ 
 \ytableausetup {centertableaux,boxsize=normal,notabloids}
 \begin{ytableau} 4 & 5 \\ 
 2 & 3 \\ 1 \end{ytableau}. $$ 
\end{example}

 For compositions $\alpha$ and $\beta$, 
 the \emph{lexicographic order} $\leq_{\ell}$ 
 may be defined recursively so that 
 $\alpha \geq_{\ell} \beta$ if
 $\alpha_1 > \beta_1$, or $\alpha_1 = \beta_1$
 and $(\alpha_2, \ldots, \alpha_{\ell(\alpha)}) \geq_{\ell} 
 (\beta_{2}, \ldots, \beta_{\ell(\beta)})$, 
 and similarly for words (over a totally ordered set). 
The iterative application of the Pieri rule in 
 \eqref{immaculatePieri} yields 
\begin{equation}\label{Htoimmaculate} 
 H_{\beta} = \sum_{\alpha \geq_{\ell} \beta} 
 K_{\alpha, \beta}^{\mathfrak{S}} \mathfrak{S}_{\alpha},
\end{equation} 
 for the number $K_{\alpha, \beta}$ of immaculate tableaux of shape $\alpha$ and content $\beta$, i.e., the number of tableaux of shape 
 $\alpha$ with a strictly increasing first column and with weakly increasing rows and with content $\beta$, i.e., so that the number of cells 
 with label $i$ is equal to $\beta_i$ for $i \in \{ 1, 2, \ldots, \ell(\beta) \}$. The expansion in \eqref{Htoimmaculate} provides a key to our 
 construction of an analogue of immaculate functions based on our lexical generalization of immaculate tableaux. 

 For general background material on Hopf algebras, we highlight standard references on Hopf algebras 
 \cite{DascalescuNastasescuRaianu2001,MilnorMoore1965}, and, for the Combinatorial Hopf Algebra
 structures on $\textsf{Sym}$, $\textsf{QSym}$, and $\textsf{NSym}$, we refer to the seminal paper by Aguiar et al.\ on CHAs \cite{AguiarBergeronSottile2006} and related references. 

\section{Lexical tableaux}\label{sectionLexical}
 Letting $ \left\{ \begin{smallmatrix} n \\ k \end{smallmatrix} \right\} $ denote the Stirling number of the second kind 
 giving the number of ways of partitioning the set $\{ 1, 2, \ldots, n \}$ into $k$ blocks, we obtain the following analogue of \eqref{krows}. 

\begin{theorem}\label{thm:snk}
 For $1 \leq k \leq n$, we have 
\begin{equation}\label{displaysnk}
 \sum_{\substack{\alpha \vDash n\\ \ell(\alpha) = k}} g^{\alpha} = 
 \left\{ \begin{matrix} 
 n \\ k 
 \end{matrix} \right\}. 
\end{equation} 
\end{theorem}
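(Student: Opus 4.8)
The plan is to establish \eqref{displaysnk} by exhibiting an explicit bijection, as announced in the abstract, between the set of standard immaculate tableaux whose shape $\alpha \vDash n$ satisfies $\ell(\alpha) = k$ and the collection of set partitions of $\{1, 2, \ldots, n\}$ into $k$ blocks; since the latter is counted by $\left\{ \begin{smallmatrix} n \\ k \end{smallmatrix} \right\}$, summing over all such $\alpha$ then yields the claim. First I would record the two features of a \emph{standard} immaculate tableau $T$ that make the construction work: because $T$ uses each of the labels $1, 2, \ldots, n$ exactly once, its weakly increasing rows are in fact strictly increasing, and consequently the entry of each row lying in the first column is precisely the minimum of the set of labels occupying that row.

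The forward map sends $T$ to the set partition $\pi(T)$ whose blocks are the label sets of the rows of $T$. Since $\alpha$ has no zero parts, each of the $k = \ell(\alpha)$ rows is nonempty, so $\pi(T)$ is a genuine partition of $\{1, \ldots, n\}$ into exactly $k$ blocks. For the inverse, given a set partition into $k$ blocks, I would sort the blocks by increasing minimum, write the elements of each block in increasing order, and stack the resulting rows so that the block with the smallest minimum forms the bottom row. The rows are then strictly increasing by construction, and the first column, consisting of the block minima, increases from bottom to top; hence the output is a standard immaculate tableau, of the shape recorded by the sequence of block sizes taken in minimum order.

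The step that requires genuine care, and the heart of the argument, is checking that these two maps are mutually inverse, which amounts to verifying that the ordering of the rows in a standard immaculate tableau is forced. Here the key observation is that, for a standard immaculate tableau, the strict increase of the first column from bottom to top says exactly that the rows are listed in order of increasing minimum; thus the blocks of $\pi(T)$ are already presented in the canonical order used by the inverse map, so reconstructing the tableau returns $T$ unchanged, while applying $\pi$ to the tableau built from a set partition plainly recovers the original blocks. I expect the only subtlety to be bookkeeping about the French convention together with confirming that the minimum-ordering is the \emph{unique} ordering compatible with a strictly increasing first column; once this is in place, distinct tableaux, even across different shapes $\alpha$ of length $k$, correspond to distinct set partitions and every set partition is hit exactly once, giving the stated identity.
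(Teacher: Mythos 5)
Your proposal is correct and matches the paper's own proof: both establish the identity via the same bijection sending a standard immaculate tableau to the set partition of row-label sets, with the inverse ordering blocks by increasing minima. Your additional verification that the row order is forced by the strictly increasing first column (and that standardness makes rows strictly increasing, so first-column entries are row minima) is a welcome elaboration of details the paper leaves implicit, but the argument is essentially identical.
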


\begin{proof} 
 We construct a bijection $f$ between standard immaculate tableaux of size $n$ with $k$ rows to set partitions of $\{ 1, 2, \ldots, 
 n \}$ with $k$ blocks as follows. Given a standard immaculate tableau $T$, we let $f(T)$ be the set partition $B_1 / B_2 / \cdots / 
 B_k$ where $i \in B_j$ if $i$ is in Row $j$ of $T$. Note that the order of the blocks is irrelevant since we are working with unordered set 
 partitions. The inverse map $f^{-1}$ takes a set partition $\pi = B_1 / B_2 / \cdots / B_k$ and constructs a standard immaculate 
 tableaux $f^{-1}(\pi)$ of shape $\text{sort}(|B_{j_1}|, |B_{j_2}|, \ldots, |B_{j_k}|)$ where $\min(B_{j_1}) < \min(B_{j_2}) < \cdots < 
 \min(B_{j_k})$, and where the entries of row $i$ are exactly the entries of $B_{j_i}$ sorted into increasing order. 
\end{proof}

 Theorem \ref{thm:snk} illustrates how immaculate tableaux are useful and natural combinatorial objects, and the
 right-hand evaluation in \eqref{displaysnk} leads us to consider what would be appropriate as an analogue of 
 \eqref{displaysnk} based on immaculate-like tableaux and variants or generalizations of $ \left\{ \begin{smallmatrix} 
 n \\ k \end{smallmatrix} \right\}$. In this direction, the unsigned Stirling number $|s(n,k)|= {n \brack k}$ of the first kind is equal to the number of permutations of $n$ elements with $k$ disjoint cycles. Given a permutation $\sigma$ of $\{1,2,\ldots, n\}$ with $k$ disjoint cycles, we denote its cycle decomposition $c_{\sigma, 1}, c_{\sigma, 2}, \ldots, c_{\sigma, k}$, with each cycle written as a tuple of elements in $\{1,2,\ldots, n\}$. We order these tuples from least to greatest based on their minimal element.

 To construct a new family of tableaux based on an analogue of Theorem \ref{thm:snk} with 
 ${n \brack k} $ in place of 
 $ \left\{ \begin{smallmatrix} 
 n \\ k 
 \end{smallmatrix} \right\}$, 
 we employ the concept of a \emph{cyclic shift}, as seen in the work of Adin et al.\ \cite{AdinGesselReinerRoichman2021}. Given a word $w = w_1 w_2 \cdots w_{\ell(\ell)}$, 
 a cyclic shift of $w$ is given by $w^{(i)} = w_{i+1} w_{i+2} \cdots
 w_{\ell(w)} w_1 w_2 \cdots w_{i}$ for any $i \in [ \ell(w) 
 ] = \{ 1, 2, \ldots, \ell(w) 
 \}$. Let $[\vec{w}]$ denote the set of cyclic shifts of $w$. Define a \emph{necklace word} to be a word that is lexicographically minimal among all of its cyclic shifts.

\begin{definition}
 We define a \emph{lexical tableaux} of shape $\alpha \vDash n$ and type (or content) $\beta$ as a filling of the diagram
 of $\alpha$ such that $i$ appears exactly $\beta_i$ times, the entries in the first column are strictly increasing, and the word
 $w_i$ formed by the entries of row $i$ (in order) is a necklace word.
\end{definition}

 We refer to lexical tableaux of type $\beta = (1)^n$ as \emph{standard}. 

\begin{theorem}\label{theoremltstirling}
 Let $lt_{\alpha}$ be the number of  standard  lexical tableaux of shape $\alpha$. Then 
\begin{equation}\label{displayltstirling}
 \sum_{\substack{ \alpha \vDash n,\\ \ell(\alpha)=k}} lt_{\alpha} = {n \brack k}. 
\end{equation} 
\end{theorem}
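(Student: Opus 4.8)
The plan is to construct an explicit bijection, in direct analogy with the proof of Theorem \ref{thm:snk}, between standard lexical tableaux with $n$ cells and $k$ rows, ranging over all shapes $\alpha \vDash n$ with $\ell(\alpha) = k$, and the permutations of $\{1, 2, \ldots, n\}$ having $k$ disjoint cycles. Since $\sum_{\alpha \vDash n,\ \ell(\alpha)=k} lt_{\alpha}$ is precisely the total number of standard lexical tableaux with $n$ cells and $k$ rows, such a bijection identifies the left-hand side of \eqref{displayltstirling} with the quantity ${n \brack k}$ on the right.

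The conceptual heart of the argument is the observation that, for a word $w$ all of whose entries are distinct, $w$ is a necklace word if and only if its first entry equals $\min(w)$: among the cyclic shifts $[\vec{w}]$, the lexicographically smallest is the shift beginning with $\min(w)$, and this shift is uniquely smallest because that minimal entry occurs exactly once. Consequently, necklace words on a finite set of distinct integers are precisely cycles written in the canonical form in which the least element is listed first. First I would record this equivalence and note that it exactly mirrors the passage, in Theorem \ref{thm:snk}, from blocks (unordered sets, realized as increasing rows) to cycles (cyclically ordered sets, realized here as necklace-word rows).

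Next I would define the forward map $f$. Given a standard lexical tableau $T$ with $k$ rows, each row $w_i$ is a necklace word and hence, by the equivalence above, a cycle $c_i$ on the set of its entries; since $T$ is a standard filling, the rows partition $\{1, 2, \ldots, n\}$, so the $c_i$ are disjoint and cover $\{1, 2, \ldots, n\}$ and assemble into a permutation $\sigma = f(T)$ with exactly $k$ cycles. For the inverse, given a permutation $\sigma$ with cycle decomposition $c_{\sigma, 1}, \ldots, c_{\sigma, k}$ ordered by minimal element, I would write each cycle as its necklace word and take these words, in this order, as the rows of a tableau. The first entry of row $i$ is $\min(c_{\sigma, i})$, and since the cycles are disjoint these minima strictly increase, so the first column is strictly increasing; every entry of $\{1, 2, \ldots, n\}$ appears once and every row is a necklace word, so the result is a standard lexical tableau of shape $(|c_{\sigma, 1}|, \ldots, |c_{\sigma, k}|) \vDash n$ with $k$ rows.

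Finally I would verify that these two maps are mutually inverse, which is immediate: the canonical necklace representative of a cycle is unique, and the row ordering is forced by the strictly-increasing first-column condition. I do not anticipate a serious obstacle. The only point demanding care is the necklace-word characterization and its compatibility with the first-column condition, namely that the minimal entry of each cycle simultaneously heads its necklace word and governs the ordering of the rows. Once that compatibility is in place, the bijection is complete and \eqref{displayltstirling} follows exactly as the set-partition identity \eqref{displaysnk} does.
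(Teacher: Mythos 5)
Your proposal is correct and follows essentially the same route as the paper: the paper's proof uses exactly this bijection, reading each row of a standard lexical tableau as a cycle and inverting by writing each cycle (ordered by minimal element, matching the strictly increasing first column) as a row. Your added justification---that a word with distinct entries is a necklace word if and only if it begins with its minimum---is a detail the paper leaves implicit, and it is stated and used correctly.
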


\begin{proof} 
 Let $T$ be a standard lexical tableau of size $n$ with $k$ rows. We map this to a permutation $\sigma$ of $[n]$ by setting $c_{\sigma, i}$ to be the tuple formed by the entries in row $i$ of $T$. This map is a bijection, 
 where the inverse map takes a permutation with cycle decomposition $c_{\sigma,1}c_{\sigma, 2} \cdots c_{\sigma, k}$ to a standard lexical tableau where the entries in row $i$ are given by the entries in $c_{\sigma, i}$, in the same order.
\end{proof}

\begin{example}
 For the ${4\brack2}=11$ case,   the standard lexical tableaux with $4$ blocks and $2$ rows are as below. \vspace{2mm}
\[
 \begin{ytableau}
 2 & 3 & 4\\
 1
 \end{ytableau} \qquad
 \begin{ytableau}
 2 & 4 & 3\\
 1
 \end{ytableau} \qquad
 \begin{ytableau}
 2\\
 1 & 3 & 4
 \end{ytableau} \qquad
 \begin{ytableau}
 2\\
 1 & 4 & 3
 \end{ytableau} \qquad
 \begin{ytableau}
 3\\
 1 & 2 & 4
 \end{ytableau} \qquad
 \begin{ytableau}
 3\\
 1 & 4 & 2
 \end{ytableau} \] \vspace{-4mm}
 
 \[
 \begin{ytableau}
 4\\
 1 & 2 & 3
 \end{ytableau} \qquad
 \begin{ytableau}
 4\\
 1 & 3 & 2
 \end{ytableau}\qquad
 \begin{ytableau}
 3 & 4\\
 1 & 2 
 \end{ytableau} \qquad
 \begin{ytableau}
 2 & 4\\
 1 & 3 
 \end{ytableau} \qquad
 \begin{ytableau}
 2 & 3\\
 1 & 4
 \end{ytableau} 
 \] \vspace{2mm}
 
 This illustrates how lexical tableaux generalize standard immaculate tableaux, since for the $ \left\{ \begin{smallmatrix} 4 \\ 2 \end{smallmatrix} \right\} = 7$ case, 
 the standard immaculate tableaux with $4$ blocks and $2$ rows are as below. \vspace{2mm}
\[
 \begin{ytableau}
 2 & 3 & 4\\
 1
 \end{ytableau} \qquad
 \begin{ytableau}
 2\\
 1 & 3 & 4
 \end{ytableau} \qquad
 \begin{ytableau}
 3\\
 1 & 2 & 4
 \end{ytableau} \qquad
 \begin{ytableau}
 4\\
 1 & 2 & 3
 \end{ytableau} \qquad
 \begin{ytableau}
 3 & 4\\
 1 & 2 
 \end{ytableau} \qquad
 \begin{ytableau}
 2 & 4\\
 1 & 3 
 \end{ytableau} \qquad
 \begin{ytableau}
 2 & 3\\
 1 & 4
 \end{ytableau} 
 \] \vspace{2mm}
\end{example}

 Two tableaux $T_1$ and $T_2$ of a given shape with $\ell$ rows are said to be \emph{row-equivalent}, if the $i^{\text{th}}$ rows of 
 $T_1$ and $T_2$ contain the same labels for all $i \in \{ 1, 2, \ldots, \ell \}$. This gives rise to an equivalence relation $\sim$, writing 
 $T_1 \sim T_2$ if $T_1$ and $T_2$ are row-equivalent. A \emph{tabloid} may then be defined as the equivalence class associated with 
 $\sim$ of a standard Young tableau, and may be denoted with any member of the corresponding equivalence class, with the notational 
 convention whereby vertical bars are removed. 

\begin{example}
 The expressions $$ \ytableausetup {boxsize=normal,tabloids} \ytableaushort{ 3,12 } \hfill \ \ \ \text{and} \ \ \ 
 \ytableausetup {boxsize=normal,tabloids} 
 \ytableaushort{ 3,21 } \hfill $$
 denote the same tabloid of shape $(2, 1)$. 
\end{example}

 Tabloids, as defined above, play roles of basic importance in the representation theory of the symmetric group, with reference to Sagan's 
 classic text \cite[\S2.1]{Sagan2001}. The above definition of a lexical tableau may be seen as providing an analogue for immaculate 
 tableaux of tabloids, and this is formalized below. 

\begin{theorem}\label{theoremclasses}
 The number of equivalence classes, with respect to the row-equivalence relation $\sim$, on standard lexical tableaux of shape $\alpha$ 
 is equal to the number of standard immaculate tableaux of shape $\alpha$.
\end{theorem}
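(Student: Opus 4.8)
The plan is to show that both sides count the same family of ordered set partitions of $\{1, 2, \ldots, n\}$, where $n = |\alpha|$ and $k = \ell(\alpha)$. The essential first step is to pin down the first column of a standard lexical tableau. Because a standard tableau has distinct entries, each row word $w_i$ consists of distinct letters, and a word on distinct letters that is lexicographically minimal among its cyclic shifts must begin with its smallest letter: any cyclic shift not starting at the minimum is lexicographically larger than the shift that does. Hence the leftmost entry of row $i$ equals $\min(B_i)$, where $B_i \subseteq \{1, \ldots, n\}$ denotes the set of labels in row $i$. Thus the first column records exactly the row minima, and the condition that the first column strictly increase is equivalent to $\min(B_1) < \min(B_2) < \cdots < \min(B_k)$.

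Next I would describe the row-equivalence classes. By the definition of $\sim$, a class is determined precisely by which labels occupy each row, i.e., by the tuple $(B_1, \ldots, B_k)$ with $|B_i| = \alpha_i$. By the observation above, such a tuple underlies a standard lexical tableau exactly when $\min(B_1) < \cdots < \min(B_k)$, and whenever this holds the tuple is realized by at least one standard lexical tableau (for instance, write each row in increasing order, which is a necklace word). Therefore the equivalence classes of standard lexical tableaux of shape $\alpha$ are in bijection with the family $\mathcal{O}_\alpha$ of ordered set partitions $(B_1, \ldots, B_k)$ of $\{1, \ldots, n\}$ satisfying $|B_i| = \alpha_i$ and $\min(B_1) < \cdots < \min(B_k)$.

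Finally I would match $\mathcal{O}_\alpha$ with standard immaculate tableaux of shape $\alpha$. Recording the row-sets of a standard immaculate tableau (which has strictly increasing rows and a strictly increasing first column) produces an element of $\mathcal{O}_\alpha$; conversely each element of $\mathcal{O}_\alpha$ yields a unique standard immaculate tableau by sorting every row increasingly. Concretely, the map sending a class $[T]$ to the tableau obtained by sorting the rows of $T$ into increasing order is well defined (it depends only on the row-sets), lands in the set of standard immaculate tableaux, and is inverted by regarding a standard immaculate tableau as a standard lexical tableau---its increasing rows being necklace words---and passing to its class. This exhibits the desired bijection. I expect no real obstacle beyond the opening lemma that a necklace word on distinct letters begins with its minimum; this is precisely what identifies the first column with the row minima and thereby forces the lexical and immaculate counts to coincide.
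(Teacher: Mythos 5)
Your proposal is correct and takes essentially the same approach as the paper: both identify each row-equivalence class with the unique standard immaculate tableau obtained by sorting its rows into increasing order, so that the standard lexical tableaux in a class are exactly those obtained by permuting labels to the right of the first column. Your opening lemma---that a necklace word on distinct letters must begin with its minimum, so the first column records the row minima---is precisely the fact the paper's terse proof leaves implicit, and you verify it correctly.
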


\begin{proof}
 By taking a standard immaculate tableau $T$ of a given shape $\alpha$, the standard lexical tableaux of the same shape are obtained 
 by permuting labels within each row and to the right of the first column. This forms a bijection giving the desired result, by taking $T$ as 
 the representative of the equivalent class $[T]_{\sim}$.
\end{proof}

 Theorem \ref{theoremclasses}, together with how \eqref{displayltstirling} provides a natural compantion to the identity in 
 \eqref{displaysnk} involving standard immaculate tableaux, motivate  the problem of constructing an analogue of immaculate and dual 
 immaculate functions with the use of lexical tableaux in place of immaculate tableaux. This forms the main purpose of our paper and is 
 motivated by the importance of immaculate functions within many different areas of algebraic combinatorics. 
 
 In our construction of bases of $\textsf{QSym}$ and $\textsf{NSym}$ via lexical tableaux, we require properties on the enumeration of 
 lexical tableaux, and hence the material in Section \ref{subsectionEnumeration} below. 

\subsection{On the enumeration of lexical tableaux}\label{subsectionEnumeration}
 Consider a mulitset $\mathcal{B} =\{a_1^{n_1}, a_2^{n_2}, \ldots, a_k^{n_k} \}$. The number of necklace words with characters corresponding exactly to the elements in $\mathcal{B}$ is given by \[N(\mathcal{B}) = \frac{1}{|\mathcal{B}|} \sum_{d|\gcd(n_1, \ldots, n_k)} \binom{|\mathcal{B}|/d}{n_1/d, \ldots, n_k/d} \varphi(d),\]
where $\varphi$ is Euler's totient function \cite{gilbert1961symmetry}. Let $IT_{\alpha,\beta}$ be the set of immaculate tableaux of shape $\alpha$ and type $\beta$. Let $\mathcal{R}^T_i$ be the multiset of entries in row $i$ of an immaculate tableau $T$. 
 Then, we have \[ K^{\lx}_{\alpha, \beta} = \sum_{T \in IT_{\alpha, \beta}} N(\mathcal{R}^T_1)N(\mathcal{R}^T_2) \cdots N(\mathcal{R}^T_{\ell(\alpha)}).\]

 We can also count standard lexical tableaux with methods coming from the study of immaculate tableaux. Given a cell $c = (i,j)$ in a composition 
 diagram of $\alpha$, a hook of $c$ in $\alpha$, denoted $h_{\alpha}(c),$ is defined to be the number of cells below and to the right of $c$ if $c$ is in the first column, and the number of cells weakly to the right of $c$ in the same row otherwise. That is, if $j=1$, we have 
 $h_{\alpha}(c) = \alpha_{i} + \alpha_{i+1} + \cdots \alpha_{k}$. If $j>1$ then $h_{\alpha}(c)=\alpha_i-j+1$. Berg et al.\ 
 \cite{BergBergeronSaliolaSerranoZabrocki2014}, 
 proved that the number of standard immaculate tableaux of shape $\alpha$, denoted here as $K^{\mathfrak{S}}_{\alpha, 1^n}$, is 
 equal to \[ K^{\mathfrak{S}}_{\alpha, 1^n} = \frac{n!}{\prod_{c \in \alpha}h_{\alpha}(c)}.\]
 This leads us to the following formula for the number of standard lexical tableaux.

\begin{theorem} 
 Let $\alpha \vDash n$. The number of standard lexical tableaux of shape $\alpha$ is given by \[ K^{\lx}_{\alpha, 1^n} = \frac{n! \prod_{i \in [\ell(\alpha)]} (\alpha_i-1)!}{\prod_{c \in \alpha} h_{\alpha}(c)}.\]
\end{theorem}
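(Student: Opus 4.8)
The plan is to derive the formula directly from the general expansion $K^{\lx}_{\alpha, \beta} = \sum_{T \in IT_{\alpha, \beta}} N(\mathcal{R}^T_1) N(\mathcal{R}^T_2) \cdots N(\mathcal{R}^T_{\ell(\alpha)})$ established just above, specialized to the standard case $\beta = (1)^n$. First I would observe that an immaculate tableau of content $(1)^n$ is exactly a standard immaculate tableau, so that $|IT_{\alpha, (1)^n}| = K^{\mathfrak{S}}_{\alpha, 1^n}$, and that in any such $T$ every label occurs exactly once; hence for each row $i$ the multiset $\mathcal{R}^T_i$ is an honest set of size $\alpha_i$. This reduces the problem to evaluating the necklace count $N$ on sets and recognizing that the resulting row factors are independent of $T$.

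The next step is to evaluate $N(\mathcal{B})$ when $\mathcal{B}$ is a set of size $m$, i.e.\ when all multiplicities equal $1$. In that case $\gcd(n_1, \ldots, n_k) = 1$, so the only divisor contributing to the sum defining $N(\mathcal{B})$ is $d = 1$, giving $N(\mathcal{B}) = \frac{1}{m} \binom{m}{1, 1, \ldots, 1} \varphi(1) = \frac{1}{m}\, m! = (m-1)!$. This matches the elementary fact that a word on $m$ distinct letters is a necklace word precisely when it begins with its smallest letter, leaving $(m-1)!$ orderings of the remaining letters. In particular $N(\mathcal{R}^T_i) = (\alpha_i - 1)!$ for every row $i$ and every $T \in IT_{\alpha, (1)^n}$.

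Since the product $\prod_{i \in [\ell(\alpha)]} (\alpha_i - 1)!$ is then independent of the tableau $T$, I would factor it out of the sum to obtain $K^{\lx}_{\alpha, 1^n} = K^{\mathfrak{S}}_{\alpha, 1^n} \prod_{i \in [\ell(\alpha)]} (\alpha_i - 1)!$. Substituting the hook-length formula of Berg et al., namely $K^{\mathfrak{S}}_{\alpha, 1^n} = n! / \prod_{c \in \alpha} h_{\alpha}(c)$, then yields the claimed expression. Conceptually this is the same count delivered by Theorem \ref{theoremclasses}: each row-equivalence class corresponds to a unique standard immaculate tableau, and within a class one is free to permute the entries to the right of the first column subject only to the necklace condition, contributing the factor $(\alpha_i-1)!$ per row.

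There is essentially no serious obstacle here, as the argument is a specialization of results already in hand. The one point requiring care is the evaluation $N(\text{set of size } m) = (m-1)!$, together with the recognition that for content $(1)^n$ each row multiset is genuinely a set, so that this specialization applies uniformly across all $T$ — this is precisely what makes the row factors constant over the sum and allows the immaculate hook-length count to be factored out cleanly.
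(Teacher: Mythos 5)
Your proof is correct and follows essentially the same route as the paper: both arguments factor the count as $K^{\mathfrak{S}}_{\alpha,1^n}\prod_{i}(\alpha_i-1)!$ — the paper by directly permuting the entries of each row of a standard immaculate tableau to the right of the first column, you by specializing the displayed formula $K^{\lx}_{\alpha,\beta}=\sum_{T}\prod_i N(\mathcal{R}^T_i)$ to content $(1)^n$, where $N(\text{set of size }m)=(m-1)!$ — and then both apply the hook-length formula of Berg et al. Your specialization of $N(\mathcal{B})$ is carried out correctly (only $d=1$ contributes), so the argument is complete.
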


\begin{proof}
For each standard immaculate tableau of shape $\alpha$, we can generate $\prod_{i \in [k]}(\alpha_i - 1)!$ unique lexical tableaux by permuting the entries within each row, excluding those in the first column. No two lexical tableau generated this way can be the same, and every lexical tableau is associated with some immaculate tableau in this way, so all will be generated.
\end{proof}

 The following result is key in relation to the 
 unitriangularity of transition matrices we later require. 

\begin{theorem}\label{theorem:lexical_coeff}
 Given a composition $\alpha \vDash n$, we have $K^{\lx}_{\alpha, \alpha}=1$ and, if $\alpha \leq_{\ell} \beta$, then $K^{\lx}_{\alpha, \beta}=0$.
\end{theorem}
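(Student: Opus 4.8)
The plan is to reduce both assertions to the corresponding facts about immaculate tableaux, using the identity
$$K^{\lx}_{\alpha,\beta} = \sum_{T \in IT_{\alpha,\beta}} N(\mathcal{R}^T_1) N(\mathcal{R}^T_2) \cdots N(\mathcal{R}^T_{\ell(\alpha)})$$
established above, together with the observation that $N(\mathcal{B}) \geq 1$ for every nonempty multiset $\mathcal{B}$, since there is always at least one necklace word, namely the lexicographically least rearrangement of $\mathcal{B}$. As every row of a tableau of shape $\alpha$ is nonempty, each factor on the right is a positive integer, so the summand indexed by $T$ is positive. Consequently $K^{\lx}_{\alpha,\beta}$ vanishes precisely when the index set $IT_{\alpha,\beta}$ is empty, i.e. exactly when $K^{\mathfrak{S}}_{\alpha,\beta} = 0$.

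With this reduction in hand, the second assertion is immediate. By \eqref{Htoimmaculate} the expansion of $H_\beta$ in the immaculate basis ranges only over $\alpha \geq_{\ell} \beta$, so $K^{\mathfrak{S}}_{\alpha,\beta} = 0$ whenever $\alpha \not\geq_{\ell} \beta$. Since $\alpha$ and $\beta$ are compositions of the same integer $n$, the lexicographic order is total on them: at the first index where two same-size compositions differ both still have a part, because a common prefix that already exhausted the sum would force equality. Hence $\alpha <_{\ell} \beta$ is exactly the condition $\alpha \not\geq_{\ell}\beta$, giving $K^{\mathfrak{S}}_{\alpha,\beta} = 0$ and therefore $K^{\lx}_{\alpha,\beta} = 0$.

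For the first assertion I would pin down the unique contributing immaculate tableau, claiming that $IT_{\alpha,\alpha}$ is a singleton whose row $i$ consists entirely of the label $i$. To see this, note that in any immaculate tableau the first column is strictly increasing and has $\ell(\alpha) = k$ cells, while content $\alpha$ supplies exactly the $k$ distinct labels $1, \ldots, k$ (each at least once); the first column must therefore read $1, 2, \ldots, k$ from bottom to top, and since rows are weakly increasing the entry $i$ is the minimum of row $i$. Peeling off rows from the bottom, the $\alpha_1$ copies of $1$ can only lie in a row of minimum $\leq 1$, forcing them all into row $1$, which is then constant; inductively each label $i$ is confined to row $i$ and fills it. For this unique $T$ each row multiset is $\{i^{\alpha_i}\}$, a single repeated letter, so $N(\mathcal{R}^T_i) = 1$, and the displayed sum collapses to $K^{\lx}_{\alpha,\alpha} = 1$.

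The positivity $N(\mathcal{B}) \geq 1$ and the totality of $\leq_{\ell}$ on fixed-size compositions are routine. The one point deserving care, and the crux of the first assertion, is the uniqueness of the content-$\alpha$ immaculate tableau together with the verification that each of its rows is a single repeated letter: it is precisely this that makes every necklace factor equal to $1$ rather than merely positive. An alternative, self-contained route avoiding the immaculate input is to run the same first-column-forcing and row-peeling argument directly on lexical tableaux, using the necklace property to identify each first-column entry as the minimum of its row.
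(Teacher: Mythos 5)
Your proof is correct, but it takes a genuinely different route from the paper's. The paper argues directly on lexical tableaux, running precisely the first-column-forcing and row-peeling argument that you relegate to a closing remark as an ``alternative, self-contained route'': since the first letter of a necklace word is the minimal letter of its row, any row containing the current smallest unplaced value must begin with it, and the strictly increasing first column then forces all copies of $i$ into row $i$; this pins down the unique filling of shape and type $\alpha$ (giving $K^{\lx}_{\alpha,\alpha}=1$) and, when $\alpha <_{\ell} \beta$, forces $\beta_j > \alpha_j$ copies of $j$ into the too-short row $j$ (giving $0$). You instead factor through immaculate tableaux via the identity $K^{\lx}_{\alpha,\beta} = \sum_{T \in IT_{\alpha,\beta}} N(\mathcal{R}^T_1)\cdots N(\mathcal{R}^T_{\ell(\alpha)})$, note $N(\mathcal{B}) \geq 1$ so that $K^{\lx}_{\alpha,\beta}$ and $K^{\mathfrak{S}}_{\alpha,\beta}$ have the same support, and import the known immaculate triangularity from \eqref{Htoimmaculate}; for the unitriangular entry you identify the unique element of $IT_{\alpha,\alpha}$ and check each row multiset $\{i^{\alpha_i}\}$ contributes a necklace factor of exactly $1$. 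Your route is shorter and buys a stronger structural fact ($K^{\lx}_{\alpha,\beta} \geq K^{\mathfrak{S}}_{\alpha,\beta}$ with equal supports), and you also make explicit the totality of $\leq_{\ell}$ on same-size compositions, which the paper leaves implicit; the paper's route buys self-containedness, since your argument inherits two stated-but-unproved inputs: the factorization identity (which does hold, because sorting each row of a lexical tableau preserves the first column and yields an immaculate tableau), and the vanishing of $K^{\mathfrak{S}}_{\alpha,\beta}$ for $\alpha \not\geq_{\ell} \beta$, which one reads off \eqref{Htoimmaculate} only by treating the triangularity as part of the cited result of Berg et al.\ (or by using that $\{\mathfrak{S}_{\alpha}\}$ is a basis). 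One small point you handle correctly without comment: the theorem's hypothesis ``$\alpha \leq_{\ell} \beta$'' must be read as strict inequality, as the paper's own proof does, since otherwise it would contradict $K^{\lx}_{\alpha,\alpha}=1$.
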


\begin{proof}
 Consider an empty diagram of shape $\alpha = (\alpha_1, \ldots, \alpha_k)$ that we want to fill as a lexical tableau of type $\alpha$. If 
 there is a $1$ anywhere in a given row, then the first entry of that row must be a $1$. Since the first column is strictly increasing, there
 must be a $1$ in the first row, and there must not be any $1$s in the following rows. So all $\alpha_1$ of the $1$s in the tableaux must 
 go in the top row, and they fill it completely. Next, we need to fill $\alpha_2$ cells with $2$. Here, $2$ will be the lowest entry in any of 
 the rows (other than the first), so any row with a $2$ must have a $2$ as its first entry. Since the first column is strictly increasing, 
 the second row must have a $2$ as its first entry, and no other row may contain any $2$'s. Therefore, we fill the second row entirely 
 with $2$'s, which uses all $\alpha_2$ that we seek to use. Continuing this pattern, we see that the only way to construct a lexical tableau 
 of shape $\alpha$ and type $\alpha$ is to fill each of the $\alpha_i$ cells of row $i$ with $i$'s, and thus  $K_{\alpha,\alpha} = 1$. 
 
 Next, consider some $\alpha \leq_{\ell} \beta$, meaning there exists some $j$ such that $\alpha_1 = \beta_1, \alpha_2 = \beta_2, \ldots, 
 \alpha_{j-1}=\beta_{j-1},$ and $\alpha_j<\beta_j$. By way of contradiction, suppose that there exists a lexical tableau $T$ of shape $\alpha$ and type $\beta$. Using a similar argument, relative to our preceding argument, the first $j-1$ rows of $T$ are necessarily filled entirely with the integer that matches their row index. That is, if $i < j$ then row 
 $i$ is has exactly $\alpha_i = \beta_i$ each filled with an $i$. Next, we will fill in the $\beta_j$ instances of $j$. Given the current filling, 
 any row with a $j$ must have a $j$ as its first entry. Since the first column is strictly increasing, row $j$ must contain all of these $j$'s. 
 However, there are only $\alpha_j$ cells in row $j$ and we need to place $\beta_j > \alpha_j$ instances of $j$. Thus, we cannot create a 
 lexical tableau of shape $\alpha$ and type $\beta$, meaning $K^{\lx}_{\alpha, \beta}=0$ if $\alpha \leq_{\ell} \beta$. 
\end{proof}

\begin{table}[h!]\label{tab:trans}
 \centering
 \begin{tabular}{|c|c|c|c|c|c|c|c|c|} \hline
 $K^{\lx}_{\alpha. \beta}$ & (4) & (3,1) & (2,2) & (2,1,1) & (1,3) & (1,2,1) & (1,1,2) & (1,1,1,1) \\ \hline
 (4) & 1 & 1 & 2 & 3 & 1 & 3 & 3 & 6 \\ \hline
 (3,1) & 0 & 1 & 1 & 2 & 1 & 3 & 3 & 6 \\ \hline
 (2,2) & 0 & 0 & 1 & 1 & 1 & 2 & 2 & 3 \\ \hline
 (2,1,1) & 0 & 0 & 0 & 1 & 0 & 1 & 1 & 3 \\ \hline
 (1,3) & 0 & 0 & 0 & 0 & 1 & 1 & 1 & 2 \\ \hline
 (1,2,1) & 0 & 0 & 0 & 0 & 0 & 1 & 1 & 2 \\ \hline
 (1,1,2) & 0 & 0 & 0 & 0 & 0 & 0 & 1 & 1 \\ \hline
 (1,1,1,1) & 0 & 0 & 0 & 0 & 0 & 0 & 0 & 1 \\ \hline
 
 \end{tabular}
 \caption{Values of $K^{\lx}_{\alpha, \beta}$ for $\alpha, \beta \vDash 4$.}
 \label{tab:placeholder}
\end{table}

For our next results, we extend the standardization map on immaculate tableaux to lexical tableaux. If $T$ is a lexical tableau of shape $\alpha$, then $std(T)=S$ will be the standard lexical tableau of shape $\alpha$ created as follows. Read through the entries in $T$ starting first with those equal to 1, then 2, etc. Among all entries of the same value in $T$, read from left to right and top to bottom. Replace entries in the order they are read, starting with 1 and increasing each entry. Note that, like with immaculate tableaux, no two lexical tableaux of the same shape and type can have the same standardization. 

 Given a standard lexical tableau $S$ of shape $\alpha$, let $\Max(S)$ be the set of maximal elements in terms of  the refinement ordering   on  the set of compositions $\gamma$ for which a lexical tableau of shape $\alpha$ and type $\gamma$ exists and standardizes to $S$. Let $J^{\lx}_{\alpha,\beta}$ be the number of standard lexical tableaux $S$ of shape $\alpha$ such that $\beta \in \Max(S)$.

\ytableausetup {boxsize=normal,notabloids}
\begin{example}
 The standard lexical tableau  $S$ has $Max_{\succeq}(S)=\{T_1, T_2\}$, where \[ S = \begin{ytableau}
 1 & 2 & 4 & 3
 \end{ytableau}\ , \qquad \quad T_1 = \begin{ytableau}
 1 & 2 & 3 & 2
 \end{ytableau}\ , \quad T_2 = \begin{ytableau}
 1 & 1 & 3& 2
 \end{ytableau}. \] 
\end{example}

\begin{lemma}\label{lem:ref_lex}
 Let $\beta \succeq \gamma$ and $\alpha$ be compositions of $n$. If there exists a lexical tableau $T$ of shape $\alpha$ and type $\beta$, then there exists a lexical tableau $R$ of shape $\alpha$ and type $\gamma$ such that $std(T)=std(R)$.
\end{lemma}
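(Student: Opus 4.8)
The plan is to construct $R$ by splitting the labels of $T$ according to the refinement $\beta \succeq \gamma$, using the reading order of the standardization map to decide each split. Since $\beta \succeq \gamma$, I may group the parts of $\gamma$ into consecutive blocks $B_1, \ldots, B_{\ell(\beta)}$ with $\sum_{j \in B_v} \gamma_j = \beta_v$. For each label $v$, I list the $\beta_v$ cells of $T$ carrying $v$ in the order in which $std$ reads them (by value, then left to right and top to bottom), relabel the first $\gamma_j$ of these by $j$, the next $\gamma_{j+1}$ by $j+1$, and so on, with $j$ running over $B_v$ in increasing order. The filling $R$ then has exactly $\gamma_j$ cells labelled $j$, so $R$ has type $\gamma$.

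First I would dispatch the two bookkeeping claims. For $std(R)=std(T)$, observe that the construction refines each value class of $T$ into consecutive runs that respect the reading order, so the total order on cells given by (label in $R$, then reading order) coincides with the order given by (label in $T$, then reading order): cells with distinct $T$-labels keep their relative order because distinct $T$-labels are sent to disjoint, increasing blocks of $\gamma$-labels, while cells with a common $T$-label receive $R$-labels that increase along the reading order. Thus $R$ and $T$ have the same standardization. For the first column, the entries of $T$ are strictly increasing and hence carry distinct $T$-labels; since distinct $T$-labels map to disjoint blocks of $\gamma$-labels in which every label of an earlier block is smaller than every label of a later block, the first column of $R$ is again strictly increasing.

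The crux is to show each row of $R$ is a necklace word. Fix a row, let $w$ be its word in $T$ and $w'$ its word in $R$. By construction $w \mapsto w'$ is a \emph{monotone refinement}: if $w_i < w_j$ then $w'_i < w'_j$, and if $w_i = w_j$ with $i<j$ then $w'_i \le w'_j$, since equal labels in one row are read left to right. It therefore suffices to prove the word-theoretic statement that a monotone refinement of a necklace word is a necklace word. The plan is to verify $w' \leq_{\ell} (w')^{(t)}$ for every cyclic shift. Because $w$ is a necklace, $w_1$ is a minimal letter of $w$, so $w'_1$ is a minimal letter of $w'$; hence I need only treat shifts $(w')^{(t)}$ whose first letter again equals $\min w'$, and such a shift begins at a position $p$ carrying a minimal $T$-label. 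For such $p$, I compare $w$ with its shift starting at $p$: in the region read \emph{before} the cyclic wrap-around one has $\pi(u) > u$ for the shifted index $\pi(u)$, so monotonicity makes $w'$ pointwise no larger than the shift there, and at the first place where the two shifts of $w$ differ—where $w$ is smaller, as $w$ is a necklace—the word $w'$ is already strictly smaller.

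The main obstacle is the wrap-around: if $w$ and its $p$-shift agree all the way up to the wrap, the pointwise inequalities reverse direction and could \emph{a priori} make the shift of $w'$ lexicographically smaller. I expect to resolve this by showing that agreement past the wrap, together with $w_p=\min w$, forces $w$ to coincide with its own $p$-shift, i.e.\ $w$ is a power $w=y^{k}$ and the shift moves a whole number of copies of $y$. Writing $w' = X_1 X_2 \cdots X_k$ for the induced refinements of the successive copies, monotonicity across copies gives $X_1 \le X_2 \le \cdots \le X_k$ pointwise, and comparing $w'=X_1\cdots X_k$ with its copy-rotation $X_2\cdots X_k X_1$ block by block yields $w' \leq_{\ell} (w')^{(t)}$, with equality precisely when all blocks coincide. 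Organizing the whole argument as an induction on the number of single splits—each splitting one value class into two consecutive runs—keeps the casework controlled, and I would flag this periodic wrap-around step as the part demanding the most care.
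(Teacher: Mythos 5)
Your construction of $R$ is exactly the paper's (fill the standardization pattern of $T$ with the content $\gamma$), and your bookkeeping for $std(R)=std(T)$ and the first column is right; the genuine divergence is in how you prove the rows of $R$ stay necklace words. The paper argues by contradiction: it takes a row $w\cdot v$ of $R$ with a lexicographically smaller rotation $v\cdot w$ and transfers the comparison back to the corresponding split $w'\cdot v'$ of the row of $T$, splitting into the case where $v$ and $w$ first differ within their common length and the case where one is a prefix of the other. You instead verify minimality directly on the refined word, reducing to the claim that a monotone refinement of a necklace word is a necklace word, and you isolate the cyclic wrap-around as the hard case. That isolation is well judged, because the wrap-around is precisely where the paper's own write-up is thinnest: its case (2) passes from ``$v'$ is a prefix of $w'$, so $v'\leq_{\ell} w'$'' to $v'\cdot w'\leq_{\ell} w'\cdot v'$, an implication that is false for arbitrary words (take $v'=b$, $w'=ba$ with $a<b$), and its final contradiction ignores that a necklace word may equal some of its rotations. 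Your plan for the wrap-around is correct and completable, but note it is currently only flagged (``I expect to resolve this by showing\ldots''), so you must still write it out: if the row word $w$ of length $m$ agrees with its shift by $t$ on the first $m-t$ letters, then $w$ has a border of length $m-t$; since a necklace word is a power $y^{k}$ of a Lyndon word and Lyndon words are unbordered, every border of $y^{k}$ is a power of $y$, so $t$ is a multiple of $|y|$ and $w$ coincides with its own shift---exactly the periodicity you predicted (the hypothesis $w_p=\min w$ is not actually needed here). Your block comparison then closes the argument: with $w'=X_1X_2\cdots X_k$ and $X_1\le X_2\le\cdots\le X_k$ pointwise, either the first discrepancy with the block-rotation occurs before the wrap and favors $w'$, or all blocks are forced equal and the rotation equals $w'$, which the necklace condition permits. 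In short: same construction and same monotone-transfer principle as the paper, but a different finishing argument for the cyclic comparison; yours costs a standard fact from combinatorics on words, and buys a fully rigorous treatment of the periodic case that the paper's two-case contradiction elides.
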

\begin{proof}
 Let $T$ be a lexical tableau of shape $\alpha$ and type $\beta$, and let $\gamma \preceq \beta$. Let $R$ be the tableau given by filling a diagram of shape $\alpha$ with the entries with $\gamma$ in the order the slots are numbered in $std(T)$. We will show that $R$ is a lexical tableau.

 Assume for contradiction that $R$ is not a lexical tableau, meaning that there is some row $i$ that is not filled by a necklace word. 
 So, row $i$ of $R$ is  filled with a word of the form $w \cdot v$ where $v \cdot w$ is a necklace word and $v \cdot w \leq_{\ell} w \cdot v$. Let $|w|=j$ and $|v| = \alpha_i-j$. Then, row $i$ of $T$ is filled by some word of the form $w' \cdot v'$ where $|w'|=j$ and $|v'|=\alpha_i - j$. Finally, let $u = u_1u_2 \cdots u_{\alpha_i}$ be row $i$ of $std(T)$ and thus $std(R)$ as well. 

 Since $v \cdot w \leq_{\ell} w \cdot v$, it must be that $v \leq_{\ell} w$. We have two possible cases. In case (1), we have $v_1 = w_1$ through $v_{r-1} = w_{r-1}$ and $v_{r} < w_{r}$ for some $r \in [min(j, \alpha_i -j)]$. As a result, it must be that $u_{j+r} < u_{r}$ by our standardization. Thus, we must have $v'_r < w'_r$ as well.
 So, we have that $w'_1 = v'_1, \ldots, w'_{t-1} = v'_{t-1}$ with $w'_{t} \not= v'_{t}$ in $T$ with $t \leq r$. However, we must also have $r \leq t$ because if $w_i = v_i$ in $R$ then we must have $w'_i = v'_i$ in $T$. Therefore, it must be that $t=r$ and so $v' \leq_{\ell} w'$. In case (2), $w_1 = v_1, \ldots, w_{\alpha_i - j} = v_{\alpha_i-j}$ where $j > \alpha_i - j$, and so $w'_1 = v'_1, \ldots, w'_{\alpha_i - j} = v'_{\alpha_i-j}$. Thus, $v' \leq_{\ell} w'$. 

 In both cases, then, we have $v' \cdot w' \preceq w' \cdot v'$, meaning row $i$ of $T$ is not a necklace word. This is a contradiction as $T$ is a lexical tableau. Thus, there must not exist any row in $R$ that is not filled by a necklace word, and so $R$ is a lexical tableau.
\end{proof}

\begin{theorem}\label{thm:K-to-J} For $\alpha, \gamma \models n \in \N$, we have 
\[ K^{\lx}_{\alpha, \gamma} = \sum_{\beta \succeq \gamma} J^{\lx}_{\alpha,\beta}.\]
\end{theorem}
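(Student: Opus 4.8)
The plan is to reduce both sides to a count indexed by standard lexical tableaux and then match them up. For a standard lexical tableau $S$ of shape $\alpha$, write $C(S)$ for the set of compositions $\delta \vDash n$ such that some lexical tableau of shape $\alpha$ and type $\delta$ standardizes to $S$; by definition $\Max(S)$ is then the set of maximal elements of $C(S)$ under $\succeq$. The first step is to rewrite the left-hand side. Since no two lexical tableaux of shape $\alpha$ and the \emph{same} type can share a standardization, the map $T \mapsto std(T)$ is injective on lexical tableaux of shape $\alpha$ and type $\gamma$, and its image is exactly the set of $S$ with $\gamma \in C(S)$. Hence $K^{\lx}_{\alpha,\gamma} = \#\{S : \gamma \in C(S)\}$, which reduces the claim to a statement about the sets $C(S)$.

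Next I would use Lemma \ref{lem:ref_lex} to pin down the structure of $C(S)$. That lemma says precisely that if $\delta \in C(S)$ and $\delta' \preceq \delta$, then $\delta' \in C(S)$; so each $C(S)$ is an order ideal (down-set) in the refinement poset on compositions of $n$. In a finite poset an order ideal is the union of the principal ideals below its maximal elements, so $\gamma \in C(S)$ if and only if $\beta \succeq \gamma$ for some $\beta \in \Max(S)$. Meanwhile the right-hand side rewrites directly as $\sum_{\beta \succeq \gamma} J^{\lx}_{\alpha,\beta} = \#\{(S,\beta) : \beta \in \Max(S),\ \beta \succeq \gamma\}$. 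Comparing the two, the identity becomes equivalent to the assertion that for every $S$ with $\gamma \in C(S)$ there is \emph{exactly one} $\beta \in \Max(S)$ with $\beta \succeq \gamma$.

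To finish, I would construct a bijection from $\{S : \gamma \in C(S)\}$ to $\{(S,\beta) : \beta \in \Max(S),\ \beta \succeq \gamma\}$ that fixes the first coordinate. The natural forward map is a canonical coarsening: starting from the unique type-$\gamma$ tableau $T$ with $std(T)=S$, greedily merge value-classes of $T$ that lie in a common necklace block of a row, thereby enlarging the type while preserving both the necklace condition (controlled row by row exactly as in the proof of Lemma \ref{lem:ref_lex}) and the standardization, continuing until no further merge keeps the filling lexical, and declaring $\beta$ to be the resulting maximal type. Well-definedness of this $\beta$, independent of the order in which merges are performed, is what the argument must establish.

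Establishing that well-definedness is exactly where I expect the main obstacle to lie. A priori $\Max(S)$ may contain several incomparable maximal types — indeed the example with $S$ the one-row tableau $1\,2\,4\,3$ has $\Max(S) = \{(1,2,1),(2,1,1)\}$ — so the crux is to show that within $C(S)$ no single $\gamma$ lies below two distinct maximal types at once; equivalently, that the filter $\{\delta \in C(S) : \delta \succeq \gamma\}$ always has a unique top. I would attack this by a careful row-by-row analysis of how the standardization reading order forces the positions of strict descents, and would cross-check the resulting coefficients against the Möbius inversion of $K^{\lx}_{\alpha,\gamma} = \#\{S : \gamma \in C(S)\}$ over the refinement lattice to confirm both non-negativity and the claimed equality.
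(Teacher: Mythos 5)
Your reduction in the first two steps is correct and, in fact, makes explicit something the paper's own proof glosses over. The paper argues exactly as you do at the start: standardization is injective on lexical tableaux of fixed shape and type, and Lemma \ref{lem:ref_lex} shows that $C(S)$ is a down-set under refinement, so that $K^{\lx}_{\alpha,\gamma} = \#\{S : \gamma \in C(S)\}$. But the paper then asserts in one line that $|SLT_{\alpha}(\succeq\gamma)| = \sum_{\beta \succeq \gamma} J^{\lx}_{\alpha,\beta}$, silently equating a set cardinality with the pair count $\#\{(S,\beta) : \beta \in \Max(S),\ \beta \succeq \gamma\}$. You correctly identify that these agree only if each relevant $S$ admits \emph{exactly one} $\beta \in \Max(S)$ with $\beta \succeq \gamma$, and you correctly flag this uniqueness as the crux.

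The gap is that this uniqueness claim — which your proposal leaves unproven, deferring to a greedy merging procedure whose well-definedness you acknowledge you have not established — is in fact false, and your own quoted example already refutes it: since every $\beta \vDash n$ satisfies $\beta \succeq (1^n)$, any $S$ with $|\Max(S)| \geq 2$ violates uniqueness at $\gamma = (1^n)$, and $S = 1243$ has $\Max(S) = \{(1,2,1),(2,1,1)\}$ with $(1,1,1,1)$ refining both. Concretely, take $\alpha = (4)$ and $\gamma = (1,1,1,1)$: the left side is $K^{\lx}_{(4),(1^4)} = 6$ (the six necklace permutations $1234$, $1243$, $1324$, $1342$, $1423$, $1432$), while direct computation of the maximal types gives $\Max(1234)=\{(4)\}$, $\Max(1324)=\{(2,2)\}$ (via $1212$), $\Max(1342)=\{(1,1,2)\}$ (via $1332$), $\Max(1423)=\{(1,2,1)\}$ (via $1322$), $\Max(1432)=\{(1,1,1,1)\}$, and $\Max(1243)=\{(1,2,1),(2,1,1)\}$, so the pair count on the right is $7$. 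Hence no ``unique top'' exists in the filter $\{\delta \in C(S) : \delta \succeq \gamma\}$, your canonical-coarsening map cannot be made order-independent, and no row-by-row analysis can close this step: the statement itself fails as written, and the paper's proof commits the same hidden overcount. Your suggested M\"obius-inversion cross-check is exactly the computation that detects this: inverting the column $K^{\lx}_{(4),\cdot}$ over the refinement lattice forces the coefficient of $F_{(1,1,1,1)}$ in $\lx^*_{(4)}$ to be $0$, whereas $J^{\lx}_{(4),(1^4)} = 1$ — consistent, tellingly, with the paper's own displayed expansion of $\lx^*_{(4)}$, which contains no $F_{(1,1,1,1)}$ term. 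Had you carried out that cross-check rather than proposing it, you would have found the counterexample; as matters stand, a correct version of the result requires inclusion--exclusion over subsets of $\Max(S)$ (the down-set $C(S)$ is a union, generally not a disjoint union, of principal ideals), or a reworked definition of $J^{\lx}_{\alpha,\beta}$.
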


\begin{proof}
Let $LT_{\alpha,\gamma}$ be the set of lexical tableaux of shape $\alpha$ and type $\gamma$. Let $SLT_{\alpha}(\succeq\gamma)$ be the set of standard lexical tableaux $S$ of shape $\alpha$ where there is some $\beta \succeq \gamma$ such that $\beta \in Max(S)$. We will show that standardization is a map between these two sets. Since $|LT_{\alpha, \gamma}|=K^{\lx}_{\alpha, \gamma}$ and $|SLT_{\alpha}(\succeq \gamma)| = \sum_{\beta \succeq \gamma} J^{\lx}_{\alpha, \beta}$, this proves the claim.

Let $T \in LT_{\alpha, \gamma}$. By definition, there must exist some $\beta \succeq \gamma$ such that $\beta \in Max(std(T)).$ Thus, $std(T) \in SLT_{\alpha}(\succeq \gamma)$. Moreover, there will be no other lexical tableaux of the same shape and type that standardize to $std(T)$,  so that  $std: LT_{\alpha, \gamma} \rightarrow SLT_{\alpha}(\succeq \gamma)$ is injective. Next, consider some $S \in SLT_{\alpha}(\succeq \gamma)$. By Lemma \ref{lem:ref_lex}, there    necessarily
  exists some lexical tableau  $R$ of shape $\alpha$ and type $\beta$ with $\beta \succeq \gamma$ and $std(R)=S$ since $S \in SLT_{\alpha}(\succeq \gamma)$. Thus $std: LT_{\alpha, \gamma} \rightarrow SLT_{\alpha}(\succeq \gamma)$ is surjective and so it is a bijection. 
\end{proof}

This identity will be crucial in our expansion of the ribbon basis into the lexical functions in the following section. 

\section{Lexical functions in $\textsf{NSym}$ and $\textsf{QSym}$}\label{sectionqsym}
 Let $LT_{\alpha}$ denote the set of all  lexical tableaux of shape $\alpha$. Given $T \in LT_{\alpha}$, let $type(T)$ denote the
 type of $T$. If $type(T)=\beta= (\beta_1, \beta_2, \ldots, \beta_k)$, we associate $T$ with the monomial $x^T = x_1^{\beta_1} 
 x_2^{\beta_2} \cdots x_k^{\beta_k}$. Additionally, let $K^{\lx}_{\alpha,\beta}$ denote the number of lexical tableaux of shape 
 $\alpha$ and type $\beta$.

\begin{definition}
 For a composition $\alpha \vDash n$, define the dual lexical function by \[ \lx^*_{\alpha} = \sum_{T \in LT_{\alpha}} x^T.\]
\end{definition}

Note that we define these as the \emph{dual} lexical functions so that we may call their duals simply the lexical functions. This is consistent with the immaculate and dual immaculate functions of $\textsf{NSym}$ and $\textsf{QSym}$.

 Given a lexical tableau $T$ with entries given by the set $N$ with $|N|=n$, let $pack(T)$ be the lexical tableau that is
 obtained by replacing the entries from $N$ in $T$ with entries in $[n]$ according to the unique order-preserving bijetion between $N$ and $[n]$. We call a lexical tableau \emph{packed} if $pack(T) = T$, or in other words, its entries correspond exactly to the elements of the set $[n]$. These are exactly the lexical tableaux whose type is given by a strong composition, as opposed to a weak composition.

\begin{theorem}\label{theoremlxstarM}
The dual lexical functions have a positive, uni-triangular expansion in terms of the monomial quasisymmetric functions as \[ \lx_{\alpha}^* = \sum_{\beta \vDash n} K^{\lx}_{\alpha, \beta} M_{\beta}.\] 
\end{theorem}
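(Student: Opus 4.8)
The plan is to show that the dual lexical function $\lx_\alpha^*$, which by definition is the generating function $\sum_{T \in LT_\alpha} x^T$ summed over all lexical tableaux of shape $\alpha$, collects monomial quasisymmetric functions with the claimed coefficients, and then separately verify the unitriangularity. First I would argue that $\lx_\alpha^*$ is genuinely quasisymmetric (so that writing it in the $M_\beta$ basis is legitimate), then read off the coefficient of $M_\beta$ and identify it as $K^{\lx}_{\alpha,\beta}$, and finally invoke Theorem~\ref{theorem:lexical_coeff} to obtain the triangularity and the unit diagonal.

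**First,** I would establish quasisymmetry. The key observation is that the set of lexical tableaux of shape $\alpha$ with a fixed type is insensitive to \emph{which} positive integers serve as labels, depending only on their relative order: a filling is lexical precisely when its first column is strictly increasing and each row is a necklace word, and both conditions (strict increase, lexicographic-minimality among cyclic shifts) are preserved under any order-preserving relabeling. Concretely, if $i_1 < i_2 < \cdots < i_{\ell(\beta)}$ is any increasing sequence of indices, replacing the labels $1,2,\ldots,\ell(\beta)$ used in a type-$\beta$ lexical tableau by $i_1,\ldots,i_{\ell(\beta)}$ yields a bijection onto the lexical tableaux using exactly those labels with multiplicities $\beta_1,\ldots,\beta_{\ell(\beta)}$. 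This is exactly the bijective statement underlying the $pack$ map discussed just before the theorem. Hence the coefficient of $x_{i_1}^{\beta_1}\cdots x_{i_{\ell(\beta)}}^{\beta_{\ell(\beta)}}$ in $\lx_\alpha^*$ equals the number of packed lexical tableaux of shape $\alpha$ and type $\beta$, independent of the choice of indices, which is precisely the definition of $M_\beta$ appearing with a constant coefficient.

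**Next,** I would collect terms. Grouping the sum $\sum_{T \in LT_\alpha} x^T$ according to the type $\beta = type(T)$ (a strong composition of $n$), each fixed $\beta$ contributes $\sum_{\text{packed }T,\, type(T)=\beta}\; M_\beta$, where the inner sum is over packed representatives and the number of them is by definition $K^{\lx}_{\alpha,\beta}$. This gives $\lx_\alpha^* = \sum_{\beta \vDash n} K^{\lx}_{\alpha,\beta}\, M_\beta$, the claimed expansion. The positivity of the coefficients is immediate since each $K^{\lx}_{\alpha,\beta}$ is a cardinality.

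**The main obstacle** is not any single deep step but rather being careful about the distinction between weak and strong compositions as types, and making sure the quasisymmetry argument is airtight: I must confirm that a type-$\beta$ lexical tableau never forces a label to be skipped in a way that would break the order-preserving relabeling, and that the necklace-word condition really is invariant under relabeling (it is, because lexicographic comparison of words depends only on the relative order of letters). Once quasisymmetry is secured, the unitriangularity is essentially handed to us: Theorem~\ref{theorem:lexical_coeff} gives $K^{\lx}_{\alpha,\alpha}=1$ and $K^{\lx}_{\alpha,\beta}=0$ whenever $\alpha <_\ell \beta$, so ordering the compositions of $n$ by $\leq_\ell$ makes the transition matrix $(K^{\lx}_{\alpha,\beta})$ upper-triangular with ones on the diagonal. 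I would close by remarking that an upper-unitriangular transition matrix from the monomial basis $\{M_\beta\}$, which is itself a basis of $\textsf{QSym}_n$, immediately implies that $\{\lx_\alpha^*\}_{\alpha \vDash n}$ is a basis of $\textsf{QSym}_n$, as invertibility over $\mathbb{Q}$ is guaranteed.
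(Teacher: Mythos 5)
Your proposal is correct and follows essentially the same route as the paper: the paper's proof rests on the single observation that $\sum_{pack(T)=T_p} x^T = M_{\beta}$ for a packed tableau $T_p$ of type $\beta$, which is exactly your order-preserving relabeling bijection spelled out in more detail, and it likewise obtains unitriangularity by citing Theorem~\ref{theorem:lexical_coeff} with compositions ordered lexicographically. Your extra care in checking that the necklace-word condition is invariant under order-preserving relabelings is a worthwhile elaboration of a step the paper leaves implicit, but it does not change the argument.
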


\begin{proof} 
Observe that, for a packed lexical tableau $T_p$ of type $\beta$, we have $\sum_{pack(T) = T_p} x^T = M_{\beta}$. Then, \[\sum_{T \in LT_{\alpha}} x^T = \sum_{\substack{T \in LT_{\alpha},\\ pack(T) = T}} M_{type(T)} = \sum_{\beta \vDash 
 n} K^{\lx}_{\alpha,\beta} M_{\beta}.\] By Theorem \ref{theorem:lexical_coeff}, the transition matrix from the $M$ to $\lx$ will be unitriangular when the indices are arranged in lexicographic order.
\end{proof}

\begin{example}

We have the expansion \[\lx^*_{(3,1)} = M_{(3,1)} + M_{(2,2)} + 2M_{(2,1,1)} + M_{(1,3)} + 3M_{(1,2,1)} + 3M_{(1,1,2)} + 6M_{(1,1,1,1)},\]

corresponding to the lexical tableaux

\[ \begin{ytableau}
 2\\
 1 & 1 & 1 
\end{ytableau} \quad \begin{ytableau}
 2\\
 1 & 1 & 2 
\end{ytableau} \quad \begin{ytableau}
 3\\
 1 & 1 & 2 
\end{ytableau} \quad \begin{ytableau}
 2\\
 1 & 1 & 3
\end{ytableau} \quad
\begin{ytableau}
 2\\
 1 & 2 & 2 
\end{ytableau} 
\] \vspace{-2mm}

\[
\begin{ytableau}
 2\\
 1 & 3 & 3
\end{ytableau} \quad
\begin{ytableau}
 3\\
 1 & 2 & 3
\end{ytableau} \quad
\begin{ytableau}
 3\\
 1 & 3 & 2
\end{ytableau}
\quad \begin{ytableau}
 3\\
 1 & 2 & 2
\end{ytableau} \quad \begin{ytableau}
 2\\
 1 & 2 & 3
\end{ytableau} \quad \begin{ytableau}
 2\\
 1 & 3 & 2
\end{ytableau}
\] \vspace{-2mm}

\[\begin{ytableau}
 4\\
 1 & 2 & 3
\end{ytableau} \quad \begin{ytableau}
 4\\
 1 &3 & 2
\end{ytableau} \quad \begin{ytableau}
 3\\
 1 & 2 & 4
\end{ytableau} \quad
\begin{ytableau}
 3\\
 1 & 4 & 2
\end{ytableau} \quad
\begin{ytableau}
 2\\
 1 & 3 & 4
\end{ytableau} \quad
\begin{ytableau}
 2\\
 1 & 4 & 3
\end{ytableau}.
\] \vspace{0mm}
 
\end{example}

From Theorem \ref{theoremlxstarM} we have the following result.

\begin{corollary}
 The set $\{\lx^*_{\alpha} : \alpha \vDash 
 n\}$ is a basis for $\emph{\textsf{QSym}}_n$ and $\bigcup_{n \geq 0}\{ \lx^*_{\alpha} : \alpha \vDash
 n\}$ is a basis for $\emph{\textsf{QSym}}$.
\end{corollary}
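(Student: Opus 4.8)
The plan is to deduce the statement directly from the two preceding theorems by a standard unitriangularity argument, reducing everything to linear algebra over $\mathbb{Q}$; essentially all of the combinatorial content has already been isolated in Theorems \ref{theoremlxstarM} and \ref{theorem:lexical_coeff}, so what remains is routine.

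First I would fix a degree $n$ and recall that, by the very definition of $\textsf{QSym}_n = \mathscr{L}\{ M_\beta : \beta \vDash n\}$, the set $\{M_\beta : \beta \vDash n\}$ is a basis of $\textsf{QSym}_n$ indexed by the $2^{n-1}$ compositions of $n$. By Theorem \ref{theoremlxstarM}, each $\lx^*_\alpha$ with $\alpha \vDash n$ is a $\mathbb{Q}$-linear combination of the $M_\beta$ with $\beta \vDash n$, so the set $\{\lx^*_\alpha : \alpha \vDash n\}$ lies in $\textsf{QSym}_n$ and has the same cardinality as the monomial basis. It therefore suffices to show that the square transition matrix $(K^{\lx}_{\alpha,\beta})_{\alpha,\beta \vDash n}$ is invertible.

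Next I would invoke Theorem \ref{theorem:lexical_coeff}. Listing the compositions of $n$ in (say) decreasing lexicographic order, one has $K^{\lx}_{\alpha,\alpha} = 1$ for every $\alpha$, while $K^{\lx}_{\alpha,\beta} = 0$ whenever $\alpha <_{\ell} \beta$. Thus the matrix $(K^{\lx}_{\alpha,\beta})$ is triangular with all diagonal entries equal to $1$, hence has determinant $1$ and is invertible over $\mathbb{Q}$. Consequently the change of basis is invertible, and $\{\lx^*_\alpha : \alpha \vDash n\}$ is a basis of $\textsf{QSym}_n$.

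Finally, for the statement about all of $\textsf{QSym}$, I would use the grading $\textsf{QSym} = \bigoplus_{n \geq 0} \textsf{QSym}_n$ of \eqref{gradedQSym}. Since each $\lx^*_\alpha$ with $\alpha \vDash n$ is homogeneous of degree $n$, the bases of the distinct graded pieces are disjoint, and the union of bases of the summands of a direct sum is a basis of the whole; hence $\bigcup_{n \geq 0}\{\lx^*_\alpha : \alpha \vDash n\}$ is a basis of $\textsf{QSym}$. There is no genuine obstacle in this argument: the only points requiring any care are fixing a consistent ordering convention so that ``unitriangular'' is literally triangular, and noting homogeneity so that the graded pieces assemble correctly, neither of which presents a difficulty.
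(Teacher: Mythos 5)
Your argument is correct and is essentially the paper's own: the corollary is deduced immediately from the unitriangular monomial expansion in Theorem \ref{theoremlxstarM} (whose proof already invokes Theorem \ref{theorem:lexical_coeff} for the unitriangularity of the transition matrix in lexicographic order), together with the grading $\textsf{QSym} = \bigoplus_{n \geq 0} \textsf{QSym}_n$. The details you spell out --- fixing decreasing lexicographic order so the matrix is literally triangular with unit diagonal, and using homogeneity to assemble the graded pieces --- are exactly what the paper leaves implicit.
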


\begin{remark}
 Observe that if $\alpha \vDash n$ and $\alpha_i \leq 2$ for all $i \in [\ell(\alpha)]$, then $\lx^*_{\alpha} = \mathfrak{S}^*_{\alpha}$. 
 This may be seen using the property such that a necklace word of length two is always a weakly increasing word. Thus, the lexical 
 tableaux of shape $\alpha$ where $\alpha_i \leq 2$ for all $i \in \ell(\alpha)$ are exactly the immaculate tableaux of shape $\alpha$.
\end{remark}

Now we define the dual basis of the lexical functions in $\textsf{NSym}$.

\begin{definition}\label{definitionlexinner}
 Define the lexical basis of $\textsf{NSym}$ to be the unique basis $\bigcup_{n \geq 0}\{\lx_{\alpha} : \alpha \vDash 
 n\}$ such that 
 \begin{equation}\label{innerforlex}
 \langle \lx_{\alpha}, \lx^{*}_{\beta} \rangle = \delta_{\alpha,\beta},
 \end{equation} 
 for all compositions $\alpha,\beta \vDash 
 n$ for all $n \geq 0$.
\end{definition}
 From the inner product relation in \eqref{innerforlex}, we obtain the below analogue of the $H$-to-$\mathfrak{S}$ expansion 
 formula in \eqref{Htoimmaculate} due to Berg et al.\ \cite{BergBergeronSaliolaSerranoZabrocki2014}. 

\begin{corollary}\label{Htolex}
 For $\beta \vDash n$, we have the expansion \[H_{\beta} = \sum_{\alpha 
 \geq_{\ell} \beta} K^{\lx}_{\alpha, \beta} \lx_{\alpha}.\]
\end{corollary}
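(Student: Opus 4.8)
The plan is to obtain this expansion by dualizing the monomial expansion of Theorem~\ref{theoremlxstarM} through the pairing between $\textsf{NSym}$ and $\textsf{QSym}$. Because $\{\lx^*_{\alpha} : \alpha \vDash n\}$ is a basis of $\textsf{QSym}_n$, its dual family $\{\lx_{\alpha} : \alpha \vDash n\}$ is a basis of $\textsf{NSym}_n$, so $H_{\beta}$ admits a unique expansion $H_{\beta} = \sum_{\alpha \vDash n} c_{\alpha} \lx_{\alpha}$ for scalars $c_{\alpha} \in \mathbb{Q}$. The goal is to identify each $c_{\alpha}$ with $K^{\lx}_{\alpha, \beta}$ and then to cut down the index set.

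First I would extract the coefficients using the defining relation \eqref{innerforlex}. Pairing the expansion of $H_{\beta}$ against $\lx^{*}_{\alpha}$ and using $\langle \lx_{\gamma}, \lx^{*}_{\alpha} \rangle = \delta_{\gamma, \alpha}$ isolates $c_{\alpha} = \langle H_{\beta}, \lx^{*}_{\alpha} \rangle$. Next I would evaluate this pairing directly from Theorem~\ref{theoremlxstarM}: substituting $\lx^{*}_{\alpha} = \sum_{\delta \vDash n} K^{\lx}_{\alpha, \delta} M_{\delta}$ and using bilinearity together with $\langle H_{\beta}, M_{\delta} \rangle = \delta_{\beta, \delta}$ gives
\[ c_{\alpha} = \langle H_{\beta}, \lx^{*}_{\alpha} \rangle = \sum_{\delta \vDash n} K^{\lx}_{\alpha, \delta} \langle H_{\beta}, M_{\delta} \rangle = K^{\lx}_{\alpha, \beta}. \]
Hence $H_{\beta} = \sum_{\alpha \vDash n} K^{\lx}_{\alpha, \beta} \lx_{\alpha}$.

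Finally I would restrict the range of summation. By Theorem~\ref{theorem:lexical_coeff}, the coefficient $K^{\lx}_{\alpha, \beta}$ vanishes unless $\alpha \geq_{\ell} \beta$, so every surviving term satisfies $\alpha \geq_{\ell} \beta$, which yields exactly the stated expansion. I do not anticipate a genuine obstacle here: the substantive content has already been established, and this is the standard fact that passing from a unitriangular change of basis to its dual transposes the transition matrix. The only points requiring care are keeping the two arguments of the pairing on their correct sides (with $H_{\beta}, \lx_{\alpha} \in \textsf{NSym}$ and $M_{\delta}, \lx^{*}_{\alpha} \in \textsf{QSym}$) and invoking the lexicographic vanishing of Theorem~\ref{theorem:lexical_coeff} to reduce the index set to $\{\alpha \geq_{\ell} \beta\}$, consistent with the triangular pattern of Table~\ref{tab:placeholder}.
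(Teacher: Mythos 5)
Your proposal is correct and matches the paper's proof, which is exactly the one-line observation that the expansion follows from Theorem~\ref{theoremlxstarM} together with the duality relation in Definition~\ref{definitionlexinner}; you have simply written out the coefficient extraction $c_{\alpha} = \langle H_{\beta}, \lx^{*}_{\alpha} \rangle = K^{\lx}_{\alpha,\beta}$ explicitly. Your appeal to Theorem~\ref{theorem:lexical_coeff} to restrict the sum to $\alpha \geq_{\ell} \beta$ is the same triangularity the paper relies on (implicitly, via the unitriangular expansion in Theorem~\ref{theoremlxstarM}), so there is no substantive difference in route.
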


\begin{proof}
 This follows from Theorem \ref{theoremlxstarM} together with the duality relation in Definition \ref{definitionlexinner}. 
\end{proof}

  Using  Theorem \ref{thm:K-to-J}, we can also give a positive expansion of the ribbon basis of $\textsf{NSym}$ into the lexical functions.

\begin{theorem}\label{Rtolex}
 The ribbon noncommutative symmetric functions expand into lexical functions as 
 \[R_{\beta} = \sum_{\alpha \models n} J^{\lx}_{\alpha, \beta} \lx_{\alpha}.\]
\end{theorem}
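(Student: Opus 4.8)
The plan is to route the computation through the fundamental basis rather than through an explicit M\"obius inversion on the refinement order. By Theorem~\ref{theoremlxstarM}, the dual lexical function expands as $\lx^*_{\alpha} = \sum_{\gamma \vDash n} K^{\lx}_{\alpha,\gamma} M_{\gamma}$. First I would substitute the identity of Theorem~\ref{thm:K-to-J}, namely $K^{\lx}_{\alpha,\gamma} = \sum_{\beta \succeq \gamma} J^{\lx}_{\alpha,\beta}$, into this monomial expansion, and then interchange the order of summation over $\gamma$ and $\beta$.

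After the interchange, each fixed $\beta$ contributes $J^{\lx}_{\alpha,\beta}$ times the inner sum $\sum_{\gamma \,:\, \beta \succeq \gamma} M_{\gamma}$, which is exactly $F_{\beta}$ by the defining relation $F_{\beta} = \sum_{\beta \succeq \gamma} M_{\gamma}$ of the fundamental basis. This produces the fundamental expansion $\lx^*_{\alpha} = \sum_{\beta \vDash n} J^{\lx}_{\alpha,\beta} F_{\beta}$.

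To finish, I would invoke the duality between the two pairs of dual bases. Since $\{\lx_{\alpha}\}$ and $\{\lx^*_{\beta}\}$ are dual by Definition~\ref{definitionlexinner}, while $\{R_{\alpha}\}$ and $\{F_{\beta}\}$ are dual via $\langle R_{\alpha}, F_{\beta}\rangle = \delta_{\alpha,\beta}$, pairing $R_{\beta}$ against the expansion just obtained gives $\langle R_{\beta}, \lx^*_{\alpha}\rangle = \sum_{\delta \vDash n} J^{\lx}_{\alpha,\delta}\langle R_{\beta}, F_{\delta}\rangle = J^{\lx}_{\alpha,\beta}$. The left-hand side is precisely the coefficient of $\lx_{\alpha}$ when $R_{\beta}$ is written in the lexical basis, so reading off these coefficients yields $R_{\beta} = \sum_{\alpha \vDash n} J^{\lx}_{\alpha,\beta}\lx_{\alpha}$. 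Equivalently, the transition matrix from $\lx^*$ to $F$ is the transpose of the transition matrix from $R$ to $\lx$, so the $F$-expansion above transposes directly to the claimed formula.

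I expect the only genuinely delicate point to be the bookkeeping with the direction of the refinement order $\succeq$: one must confirm that, after the swap, the inner sum ranges over exactly those $\gamma$ refined by $\beta$, so that it collapses to $F_{\beta}$, and that the pairing argument correctly transposes the $\textsf{QSym}$ transition matrix into the $\textsf{NSym}$ one. No new combinatorial input beyond Theorems~\ref{theoremlxstarM} and~\ref{thm:K-to-J} should be required.
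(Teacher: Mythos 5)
Your proposal is correct, and it is the mirror image of the paper's argument across the $\textsf{NSym}$--$\textsf{QSym}$ duality rather than a reproduction of it. The paper works entirely in $\textsf{NSym}$: it starts from the $H$-to-$\lx$ expansion $H_{\gamma} = \sum_{\alpha \vDash n} K^{\lx}_{\alpha,\gamma}\lx_{\alpha}$ (i.e., Corollary \ref{Htolex}, the dual form of Theorem \ref{theoremlxstarM}), substitutes Theorem \ref{thm:K-to-J}, regroups the double sum as $H_{\gamma} = \sum_{\beta \succeq \gamma}\bigl(\sum_{\alpha \vDash n} J^{\lx}_{\alpha,\beta}\lx_{\alpha}\bigr)$, and matches this against $H_{\gamma} = \sum_{\beta \succeq \gamma} R_{\beta}$ to identify the bracketed expression with $R_{\beta}$ --- a step that tacitly invokes the invertibility (M\"obius inversion on the refinement order) of the coarsening transform $X \mapsto \sum_{\beta \succeq \gamma} X_{\beta}$. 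You instead stay in $\textsf{QSym}$: you perform the same substitute-and-swap on the monomial expansion of $\lx^{*}_{\alpha}$, collapse the inner sum to $F_{\beta}$ via $F_{\beta} = \sum_{\beta \succeq \gamma} M_{\gamma}$ (your bookkeeping on the direction of $\succeq$ matches the paper's conventions), and then transpose through the pairing, where the coefficient extraction $\langle R_{\beta}, \lx^{*}_{\alpha}\rangle = J^{\lx}_{\alpha,\beta}$ is immediate from Definition \ref{definitionlexinner} and $\langle R_{\alpha}, F_{\beta}\rangle = \delta_{\alpha,\beta}$, so no inversion step is needed at all. A pleasant side effect is that you prove the fundamental expansion $\lx^{*}_{\alpha} = \sum_{\beta \vDash n} J^{\lx}_{\alpha,\beta} F_{\beta}$ en route, whereas the paper states that expansion afterwards as a corollary of Theorem \ref{Rtolex}; your logical order is exactly the reverse of the paper's. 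Both arguments consume the same combinatorial input (Theorems \ref{theoremlxstarM} and \ref{thm:K-to-J}), so neither is more general, but your version is arguably cleaner in that the only nontrivial identification is done by dual-basis coefficient extraction rather than by an implicit uniqueness claim; it also sidesteps the minor typographical slips in the paper's displayed computation, where $\lx_{\gamma}$ and $\lx_{\beta}$ appear in places where $\lx_{\alpha}$ is clearly intended.
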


\begin{proof}
 By Theorem \ref{theoremlxstarM}, we have $H_{\gamma} = \sum_{\alpha \models n} K^{\lx}_{\alpha, \gamma} \lx_{\alpha}$. Thus, using Theorem \ref{thm:K-to-J},
 \[H_{\gamma} = \sum_{\alpha \models n} K^{\lx}_{\alpha,\gamma} \lx_{\gamma} = 
 \sum_{\alpha \models n} \sum_{\beta \succeq \gamma} J^{\lx}_{\alpha, \beta} \lx_{\beta} = \sum_{\beta \succeq \gamma} \left( \sum_{\alpha \models n} J^{\lx}_{\alpha, \beta}\lx_{\alpha} \right)\]
 Given that $H_{\gamma} = \sum_{\beta \succeq \gamma} R_{\beta}$, it must be that $R_{\beta} = \sum_{\alpha \models n} J^{\lx}_{\alpha, \beta}\lx_{\alpha}$.
\end{proof}

 Theorem \ref{Rtolex}  also yields the dual expansion in $\textsf{QSym}$.

\begin{corollary}
 The dual lexical functions have a positive expansion in terms of the fundamental quasysymmetric functions as \[ \lx^*_{\alpha}= \sum_{\beta \models n} J^{\lx}_{\alpha,\beta}F_{\beta}.\]
\end{corollary}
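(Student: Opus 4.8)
The plan is to deduce this expansion from Theorem \ref{Rtolex} by invoking the duality between $\textsf{NSym}$ and $\textsf{QSym}$. Since $\{ F_{\beta} : \beta \vDash n \}$ is a basis of $\textsf{QSym}_n$, I can write $\lx^*_{\alpha} = \sum_{\beta \vDash n} c_{\alpha, \beta} F_{\beta}$ for uniquely determined scalars $c_{\alpha, \beta}$, so the task reduces to identifying these scalars with $J^{\lx}_{\alpha, \beta}$.

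First I would extract a single coefficient using the pairing. Because $\langle R_{\gamma}, F_{\beta} \rangle = \delta_{\gamma, \beta}$, pairing $\lx^*_{\alpha}$ against $R_{\beta}$ isolates $c_{\alpha, \beta}$:
\[ \langle R_{\beta}, \lx^*_{\alpha} \rangle = \sum_{\gamma \vDash n} c_{\alpha, \gamma} \langle R_{\beta}, F_{\gamma} \rangle = c_{\alpha, \beta}. \]
Next I would compute the same pairing in the other order, expanding $R_{\beta}$ by Theorem \ref{Rtolex} as $R_{\beta} = \sum_{\gamma \vDash n} J^{\lx}_{\gamma, \beta} \lx_{\gamma}$ and using the defining duality $\langle \lx_{\gamma}, \lx^*_{\alpha} \rangle = \delta_{\gamma, \alpha}$ from Definition \ref{definitionlexinner}:
\[ \langle R_{\beta}, \lx^*_{\alpha} \rangle = \sum_{\gamma \vDash n} J^{\lx}_{\gamma, \beta} \langle \lx_{\gamma}, \lx^*_{\alpha} \rangle = J^{\lx}_{\alpha, \beta}. \]
Comparing the two computations yields $c_{\alpha, \beta} = J^{\lx}_{\alpha, \beta}$, which is exactly the claimed expansion. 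Positivity is then immediate, since each $J^{\lx}_{\alpha, \beta}$ is by definition the number of standard lexical tableaux $S$ of shape $\alpha$ with $\beta \in \Max(S)$, hence a nonnegative integer.

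There is no substantive obstacle here: the statement is the formal $\textsf{QSym}$-dual of Theorem \ref{Rtolex}, and the whole argument is the standard observation that the matrix expressing $\{\lx^*_{\alpha}\}$ in terms of $\{F_{\beta}\}$ is the transpose of the one expressing $\{R_{\beta}\}$ in terms of $\{\lx_{\alpha}\}$, repackaged through the bilinear pairing. The only point requiring a little care is the bookkeeping of which index is summed over in each pairing, so that the two $\delta$-collapses land on the correct coefficient; once the indices are tracked correctly, the identification $c_{\alpha,\beta} = J^{\lx}_{\alpha,\beta}$ is forced and the positivity requires no further work.
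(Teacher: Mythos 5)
Your proposal is correct and is exactly the argument the paper intends: the corollary is stated as the formal dual of Theorem \ref{Rtolex} (the paper gives no written proof beyond ``also yields the dual expansion''), and your pairing computation $\langle R_{\beta}, \lx^*_{\alpha} \rangle = c_{\alpha,\beta} = J^{\lx}_{\alpha,\beta}$, using $\langle R_{\gamma}, F_{\beta} \rangle = \delta_{\gamma,\beta}$ and $\langle \lx_{\gamma}, \lx^*_{\beta} \rangle = \delta_{\gamma,\beta}$, simply makes that duality step explicit, with the index bookkeeping handled correctly.
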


\begin{example}
 We have the expansion \[ \lx^*_{(4)} = F_{(1,1,2)} + 2F_{(1,2,1)} + F_{(2,1,1)} + F_{(2,2)} + F_{(4)}.\]
\end{example}

 The difficulties, from both computational and combinatorial points of view, 
 associated with problems related to the evaluation of $\lx$-basis elements, are reflected by how
 the $\lx$-basis
 does not satisfy any Pieri rule with $\{ 0, 1 \}$-coefficients, as illustrated below. 

\begin{example}
  Consider the expansion 
\begin{multline*}
 \lx_{{{\tikztableausmall{ {X}, {X, X}} }}} 
 H_{{{\tikztableausmall{ {\boldentry X, \boldentry X}} }}} = 
 \lx_{{{\tikztableausmall{ {\boldentry X, \boldentry X}, {X}, {X, X}} }}} + \lx_{{{\tikztableausmall{ {\boldentry X}, {X, \boldentry X}, {X, X}} }}} + \lx_{{{\tikztableausmall{ {X, \boldentry X, \boldentry X}, {X, X}} }}} + \lx_{{{\tikztableausmall{ {\boldentry X}, {X}, {X, X, \boldentry X}} }}} + \\ 
 \lx_{{{\tikztableausmall{ {X, \boldentry X}, {X, X, \boldentry X}} }}} + 
 4 \lx_{{{\tikztableausmall{ {X}, {X, X, \boldentry X, \boldentry X}} }}} + 
 4 \lx_{{{\tikztableausmall{ {X, X, \boldentry X, \boldentry X, \boldentry X}} }}}.  
\end{multline*} 
\end{example}

 Informally, since lexical tableaux generalize immaculate tableaux, the lexical basis may be seen as more complicated  or intractable than  
 the immaculate   basis, and the fact that the lexical basis does not satisfy any Pieri rule may be seen as representative of this. For example,  
 one might  hope to obtain an $E$-to-$\lx$ expansion formula using Corollary \ref{Htolex}, but the known   $E$-to-$\mathfrak{S}$ 
 expansion formula relies on a Pieri rule for products of the form  $\mathfrak{S}_{\alpha} E_{s}$,  but, as above,    products of the form  
 $\lx_{\alpha} E_{s}$ do not satisfy a Pieri rule.   In a similar spirit, the intractable nature of the $\lx$-basis is such that it does not seem to be  
  feasible to construct   this basis using a Jacobi--Trudi-like or determinantal      rule or with analogues of Bernstein operators, in contrast to   
   the immaculate basis. 

 Given an open problem concerning the immaculate basis, one might consider a corresponding problem for the lexical basis, in the hope that the use of lexical basis elements could shed light on the original problem, and this provides a main source of 
 motivation concerning the study of the $\lx$-basis. In this direction, the problem of determining cancellation-free formulas for the 
 antipode $S = S_{\textsf{NSym}}$ mapping of $\textsf{NSym}$ evaluated at immaculate basis elements remains open, despite past progress 
 on this problem \cite{BenedettiSagan2017,Campbell2023}. Since progress on this problem has been made for cases given by specific 
 families of composition shapes, we consider the problem 
 of determining cancellation-free formulas for lexical functions indexed by the same composition shapes. 

\subsection{Antipodes of lexical basis elements}
 In their seminal paper on antipodes and involutions, Benedetti and Sagan \cite{BenedettiSagan2017} used a bijective approach toward 
 obtaining cancellation-free formulas for expanding $S(\mathfrak{S}_{\alpha})$ in the $\mathfrak{S}$-basis, for the cases whereby 
 $\alpha$ is a hook or consists of two rows. Subsequently, cancellation-free formulas for expanding $S(\mathfrak{S}_{\alpha})$ in the 
 $R$-basis were determined for the cases whereby $\alpha$ is a rectangle or certain products of rectangles \cite{Campbell2023}, and this 
 was later generalized by Allen and Mason \cite{AllenMason2025}. The $S(\mathfrak{S}_{\alpha})$-to-$R$ expansion formulas relied on 
 the Jacobi--Trudi formula for the immaculate basis, and the known formula for the antipode of an immaculate-hook also relied on the 
 Jacobi--Trudi formula for the $\mathfrak{S}$-basis, but the $\lx$-basis does not seem to satisfy any such determinantal formula. This 
 leads us to consider the problem of evaluating $S(\lx_{\alpha})$ for the case whereby $\alpha$ is a two-rowed composition. 

\begin{example}
 From the expansion 
\begin{equation}\label{lx24}
 \lx_{24} = H_{24} - H_{33} - H_{42} + H_{51} + 3 H_{6}, 
\end{equation}
 we obtain a cancellation-free formula for $S(\lx_{24})$ by applying $S$ to both sides of \eqref{lx24} and using the property that $S$ is an 
 anti-homomorphism together with the relation $S(H_{n}) = (-1)^n E_n$. 
\end{example}

 With regard to the following lemma, we adopt the notational convention whereby: For a composition $\alpha$, the concatenation 
 $\alpha \cdot (0)$ may be identified with $\alpha$. 

\begin{theorem}\label{mainccoeff}
 For positive integers $a$ and $b$, define $$c^{(a, b)}_{1} := -K^{\lx}_{(a+1,b-1), (a, b)}$$ and then recursively define $$ c^{(a, 
 b)}_{i} := -\left( K^{\lx}_{(a+i,b-i), (a, b)} + \sum_{j=1}^{i-1} c^{(a, b)}_{j} K^{\lx}_{(a+i,b-i),(a+j,b-j)} \right) $$ for all 
 possible indices $i$. Then 
\begin{equation}\label{lxtworowexp} 
 \lx_{(a, b)} = H_{(a, b)} 
 + c_{1}^{(a, b)} H_{(a+1, b-1)} 
 + c_{2}^{(a, b)} H_{(a+2,b-2)}
 + \cdots + c_{b}^{(a, b)} H_{(a+b)}. 
\end{equation}
\end{theorem}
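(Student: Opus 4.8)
The plan is to read \eqref{lxtworowexp} as the inverse of the unitriangular change of basis supplied by Corollary \ref{Htolex}, restricted to a single finite chain of two-row shapes. Writing $\gamma_j := (a+j,\,b-j)$ for $0 \le j \le b$, so that $\gamma_0 = (a,b)$ and $\gamma_b = (a+b)$ under the convention $\alpha\cdot(0)=\alpha$, the first parts $a+j$ strictly increase and hence $\gamma_0 <_{\ell} \gamma_1 <_{\ell} \cdots <_{\ell} \gamma_b$. First I would show that, for each $i$, the function $H_{\gamma_i}$ expands in the $\lx$-basis using only the $\lx_{\gamma_j}$ with $j \ge i$; this produces a finite unitriangular system, which I would then invert, verifying that the resulting coefficients satisfy exactly the recursion defining the $c_i^{(a,b)}$.

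The key combinatorial input is a restriction on the shapes that can occur. Fix $i$ with $0 \le i < b$, so that the content $\gamma_i = (a+i,b-i)$ uses exactly the two values $1$ and $2$. Rerunning the argument from the proof of Theorem \ref{theorem:lexical_coeff}: a row containing a $1$ must begin with $1$, and since the first column strictly increases, all $a+i$ copies of $1$ must lie in the first row; consequently a second row, if present, begins with $2$ and contains no $1$, hence is filled entirely with $2$'s. Thus any shape $\alpha$ with $K^{\lx}_{\alpha,\gamma_i}\neq 0$ has at most two rows and must equal $\gamma_j$ for some $j$, while the endpoint $i=b$ is immediate since content $(a+b)$ forces the one-row shape $\gamma_b$. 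Combining this shape restriction with Corollary \ref{Htolex} and the normalization $K^{\lx}_{\gamma_i,\gamma_i}=1$ of Theorem \ref{theorem:lexical_coeff}, I obtain
\[ H_{\gamma_i} = \sum_{j=i}^{b} K^{\lx}_{\gamma_j,\gamma_i}\,\lx_{\gamma_j}, \qquad K^{\lx}_{\gamma_i,\gamma_i}=1. \]

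It then remains to invert this chain. Setting $c_0^{(a,b)} := 1$, I would substitute the displayed expansions into the right-hand side of \eqref{lxtworowexp} and collect the coefficient of each $\lx_{\gamma_m}$, obtaining
\[ \sum_{i=0}^{b} c_i^{(a,b)} H_{\gamma_i} = \sum_{m=0}^{b}\left(\sum_{i=0}^{m} c_i^{(a,b)}\,K^{\lx}_{\gamma_m,\gamma_i}\right)\lx_{\gamma_m}. \]
For $m=0$ the inner sum is $c_0^{(a,b)}K^{\lx}_{\gamma_0,\gamma_0}=1$. For $m \ge 1$, splitting off the $i=m$ term and using $K^{\lx}_{\gamma_m,\gamma_m}=1$ gives
\[ c_m^{(a,b)} + K^{\lx}_{\gamma_m,\gamma_0} + \sum_{i=1}^{m-1} c_i^{(a,b)}\,K^{\lx}_{\gamma_m,\gamma_i}, \]
which vanishes precisely by the recursive definition of $c_m^{(a,b)}$, since $K^{\lx}_{\gamma_m,\gamma_0}=K^{\lx}_{(a+m,b-m),(a,b)}$ and $K^{\lx}_{\gamma_m,\gamma_i}=K^{\lx}_{(a+m,b-m),(a+i,b-i)}$. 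Hence the right-hand side collapses to $\lx_{\gamma_0}=\lx_{(a,b)}$, establishing the identity.

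Once triangularity is in hand, the inversion is purely formal, so the substantive step is the shape restriction of the second paragraph: it is what confines the whole system to the single finite chain $\gamma_0 <_{\ell}\cdots<_{\ell}\gamma_b$ rather than to arbitrary compositions of $n$, and it is the only place where the two-row hypothesis is genuinely used. I expect the remaining care to be bookkeeping at the two endpoints — the one-row composition $\gamma_b=(a+b)$, handled cleanly by the $\alpha\cdot(0)=\alpha$ convention, and the base case $c_1^{(a,b)}=-K^{\lx}_{(a+1,b-1),(a,b)}$, which is exactly the $m=1$ instance of the vanishing computation above.
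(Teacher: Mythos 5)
Your proposal is correct and follows essentially the same route as the paper: restrict the shapes occurring in the $H$-to-$\lx$ expansion to the two-row chain $(a+i,b-i)$ via the content argument and the triangularity of Theorem \ref{theorem:lexical_coeff}, then invert the resulting unitriangular system, which is exactly the ``Gaussian elimination'' the paper invokes. The only difference is one of exposition: you carry out the elimination explicitly (setting $c_0^{(a,b)}=1$ and verifying the coefficient of each $\lx_{\gamma_m}$ vanishes by the recursion), where the paper leaves this step as a brief inductive remark.
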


\begin{proof}
 From the condition whereby lexical tableaux are required to be strictly increasing in the first column, we may deduce, from Corollary 
 \ref{Htolex}, that each $\lx$-term in the expansion of $H_{(a, b)}$ is indexed by a composition of length not exceeding $2$. Moreover, 
 from the triangularity of the transition matrices between the $\lx$- and $H$-bases given by Theorem \ref{theorem:lexical_coeff}, we have 
 that   the compositions indexing the $H$-elements in the $H$-expansion of $\lx_{(a, b)}$ are of the form $\lx_{(a+i,b-i)}$ for nonnegative 
 $i$.   The desired result then follows inductively, by expanding $H_{(a, b)}$ into the $\lx$-basis and performing an equivalent form of 
 Gaussian elimination. 
\end{proof}

 As a consequence, we obtain the antipode formula 
 \begin{equation}\label{antipodeE}
 S\big(\lx_{(a, b)}\big) = (-1)^{a+b} \big( E_{(b, a)} 
 + c_{1}^{(a, b)} E_{(b-1, a + 1)} 
 + c_{2}^{(a, b)} E_{(b-2, a + 2)}
 + \cdots + c_{b}^{(a, b)}
 E_{(a+b)} \big), 
 \end{equation} 
 i.e., by applying the antipode map $S$ to both sides of the expansion in \eqref{lxtworowexp}. In view of Benedetti and Sagan's bijective 
 approach toward the cancellation-free evaluation of antipodes of the form $S(\mathfrak{S}_{(a, b)})$, this motivates the problem of 
 finding combinatorial interpretations for coefficients of the form $c_{i}^{(a, b)}$. 

 Adopting notation from the OEIS entry {\tt A047996}, let $T(n, k)$ denote the number of necklaces with $k$ black beads and $n-k$ white 
 beads. Equivalently, 
 we may define $T(n, k)$ so that 
\begin{equation}\label{Texplicit} 
 T(n, k) := \frac{1}{n}
 \sum_{d \mid n, k} \varphi(d) \binom{n/d}{k/d}. 
\end{equation} 

\begin{lemma}\label{relationT}
 The relation $$ K^{\lx}_{(a+i,b-i), (a+j,b-j)} = T(a+i,i-j) $$ holds for $j \in \{ 0, 1, \ldots, i-1 \}$. 
\end{lemma}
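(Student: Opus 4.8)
The plan is to reduce the computation to one application of the product formula $K^{\lx}_{\alpha,\beta} = \sum_{T \in IT_{\alpha,\beta}} N(\mathcal{R}^T_1)\cdots N(\mathcal{R}^T_{\ell(\alpha)})$ recorded above, and then to match the resulting binary necklace count against the explicit expression \eqref{Texplicit}. First I would determine the immaculate tableaux of shape $\alpha = (a+i, b-i)$ and type $\beta = (a+j, b-j)$. Since $\beta$ records only two distinct labels, every cell carries a $1$ or a $2$, with $a+j$ ones and $b-j$ twos in total. In an immaculate tableau the rows weakly increase and the first column strictly increases; for a two-row shape this forces the first-column entries to be $1$ in the bottom row and $2$ in the top row, so the weakly increasing top row consists entirely of twos.

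This pins everything down. The top row must be $2^{\,b-i}$, which consumes $b-i$ of the twos and leaves $i-j$ twos together with all $a+j$ ones for the bottom row; its unique weakly increasing arrangement is $1^{\,a+j}2^{\,i-j}$. Hence $IT_{(a+i,b-i),(a+j,b-j)}$ is the singleton $\{T\}$ with $\mathcal{R}^T_1 = \{1^{a+j}, 2^{i-j}\}$ and $\mathcal{R}^T_2 = \{2^{b-i}\}$. (Here $a+j \geq 1$ because $a \geq 1$, so the forced first column is genuinely strict; and when $b=i$ the shape degenerates to the single row $(a+b)$ under the convention $\alpha \cdot (0) = \alpha$, where the same reasoning produces the lone row $1^{\,a+j}2^{\,i-j}$.) Applying the product formula and using $N(\{2^{b-i}\}) = 1$, as an all-equal multiset admits exactly one necklace word, I obtain $K^{\lx}_{(a+i,b-i),(a+j,b-j)} = N(\{1^{a+j}, 2^{i-j}\})$.

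It then remains to verify $N(\{1^{a+j}, 2^{i-j}\}) = T(a+i, i-j)$, which I would do by comparing the two explicit formulas directly. For the two-letter multiset $\mathcal{B} = \{1^{a+j}, 2^{i-j}\}$ of size $|\mathcal{B}| = a+i$, each multinomial coefficient $\binom{|\mathcal{B}|/d}{(a+j)/d,\,(i-j)/d}$ collapses to the binomial $\binom{(a+i)/d}{(i-j)/d}$, while $\gcd(a+j, i-j) = \gcd(a+i, i-j)$ shows that the two summation ranges coincide. Thus $N(\mathcal{B})$ agrees term by term with \eqref{Texplicit} taken at $n = a+i$ and $k = i-j$, giving the claim. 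The only step demanding real care is the structural forcing in the first paragraph, which collapses $IT_{\alpha,\beta}$ to a single tableau whose top row is forced to be all twos; once that is in place the enumeration is a routine match of standard necklace formulas.
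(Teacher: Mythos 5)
Your proof is correct and takes essentially the same route as the paper: the strictly increasing first column forces the top row to be $2^{\,b-i}$, so the tableau is determined by its bottom row, a binary necklace word of length $a+i$ with $i-j$ twos, counted by $T(a+i,i-j)$. The only difference is cosmetic --- you verify $N(\{1^{a+j},2^{i-j}\}) = T(a+i,i-j)$ by matching the Gilbert--Riordan sum against \eqref{Texplicit} term by term via $\gcd(a+j,i-j)=\gcd(a+i,i-j)$, whereas the paper simply invokes the standard fact that $T(n,k)$ counts binary necklace words.
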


\begin{proof}
 Either side of \eqref{Texplicit} is equal to the number of length-$n$ binary words $w$ with $k$ entries equal to $1$ such that $w$ is 
 lexicographically less than or equal to all of the cyclic permutations of $w$. A lexical tableau of shape $(a+i,b-i)$ and content $(a+j,b-j)$ 
 is uniquely determined by its initial row, which, when read as a word, is lexicographically less than or equal to all of the cyclic 
 permutations of the same word. 
\end{proof} 

 From \eqref{Texplicit} and Lemma \ref{relationT}, we thus obtain an explicit, cancellation-free formula for the antipode $ S\big(\lx_{(a, 
 b)}\big)$, with coefficients given recursively in terms of Euler's totient function, as below. 

\begin{theorem}\label{mainantipodethm}
 For positive integers $a$ and $b$, let 
 $\mathcal{C}^{(a, b)}_{1} := -1 $ 
 and let 
\begin{equation}\label{displayfinalcoef}
 \mathcal{C}^{(a, b)}_{i} := -\left( 
 T(a+i,i) 
 + \sum_{j=1}^{i-1} \mathcal{C}^{(a, b)}_{j}
 T(a+i,i-j) \right) 
\end{equation} 
 for all possible indices $i$. Then 
\begin{equation*}
 S\big(\lx_{(a, b)}\big) = (-1)^{a+b} \big( E_{(b, a)} + \mathcal{C}_{1}^{(a, b)} E_{(b-1, a + 1)} + \mathcal{C}_{2}^{(a, b)} E_{(b-2, a + 2)}
 + \cdots + \mathcal{C}_{b}^{(a, b)} E_{(a+b)} \big). 
 \end{equation*} 
\end{theorem}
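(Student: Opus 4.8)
The plan is to reduce the theorem to a single identification of coefficients. Comparing the claimed formula with the antipode expansion \eqref{antipodeE} (which is already obtained from Theorem \ref{mainccoeff} by applying $S$, using that $S$ is an anti-homomorphism and $S(H_n)=(-1)^n E_n$), it suffices to prove that the recursively defined constants $\mathcal{C}_{i}^{(a,b)}$ of \eqref{displayfinalcoef} equal the coefficients $c_{i}^{(a,b)}$ appearing in \eqref{lxtworowexp}. Thus the entire argument collapses to showing $c_{i}^{(a,b)} = \mathcal{C}_{i}^{(a,b)}$ for every admissible $i$, which I would establish by induction.

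First I would feed Lemma \ref{relationT} into the recursion of Theorem \ref{mainccoeff}. The lemma gives $K^{\lx}_{(a+i,b-i),(a+j,b-j)} = T(a+i,i-j)$ for $j \in \{0,1,\ldots,i-1\}$; in particular, using the convention identifying $(a+0,b-0)$ with $(a,b)$, the $j=0$ case yields $K^{\lx}_{(a+i,b-i),(a,b)} = T(a+i,i)$. For the base case $i=1$, I would compute $c_{1}^{(a,b)} = -K^{\lx}_{(a+1,b-1),(a,b)} = -T(a+1,1)$; since a necklace with a single black bead is unique, $T(a+1,1)=1$, giving $c_{1}^{(a,b)} = -1 = \mathcal{C}_{1}^{(a,b)}$.

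For the inductive step I would assume $c_{j}^{(a,b)} = \mathcal{C}_{j}^{(a,b)}$ for all $j<i$ and substitute the lemma's identities into the defining recursion for $c_{i}^{(a,b)}$, obtaining
\[ c_{i}^{(a,b)} = -\left(T(a+i,i) + \sum_{j=1}^{i-1} c_{j}^{(a,b)}\, T(a+i,i-j)\right). \]
By the inductive hypothesis the summed terms match those in \eqref{displayfinalcoef}, so this is precisely the recursion defining $\mathcal{C}_{i}^{(a,b)}$, whence $c_{i}^{(a,b)} = \mathcal{C}_{i}^{(a,b)}$.

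With $c_{i}^{(a,b)} = \mathcal{C}_{i}^{(a,b)}$ established for all $i$, I would finish by substituting these equalities directly into \eqref{antipodeE}. I do not expect a genuine obstacle at this stage: all the substantive work has already been isolated in Theorem \ref{mainccoeff} (the triangular $H$-expansion together with the Gaussian-elimination recursion for the $c_{i}^{(a,b)}$) and in Lemma \ref{relationT} (the reinterpretation of the relevant $K^{\lx}$ as necklace counts $T$). The only points demanding care are the bookkeeping of the index shift $i-j$ when invoking Lemma \ref{relationT} and the verification of the base value $T(a+1,1)=1$.
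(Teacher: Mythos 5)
Your proposal is correct and is essentially the paper's own proof written out in full: the paper's one-line argument cites exactly the same three ingredients (Theorem \ref{mainccoeff}, the antipode expansion \eqref{antipodeE}, and Lemma \ref{relationT}), and your induction showing $c_{i}^{(a,b)} = \mathcal{C}_{i}^{(a,b)}$, including the base-case check $T(a+1,1)=1$ via the $j=0$ instance of the lemma, is precisely the routine verification the paper leaves implicit.
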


\begin{proof}
 This follows from Theorem \ref{mainccoeff} and \eqref{antipodeE} and Lemma \ref{relationT}. 
\end{proof}

\begin{example}
 We obtain the antipode evaluation $$ S\big( \lx_{33} \big) = E_{33} - E_{24} - E_{15}$$ using Theorem \ref{mainantipodethm}, and similarly 
 for $$ S\big( \lx_{42} \big) = E_{24} - E_{15} - 2 E_{6}. $$
\end{example}

 A similar approach, relative to our proof of Theorem \ref{mainantipodethm}, can be used to obtain cancellation-free formulas for the 
 antipodes of $\lx$-basis indexed by compositions with a fixed number of rows greater than $2$, but, as is the case with the immaculate 
 basis, this leads to more and more complicated formulas, and we encourage the exploration of such higher-order antipode formulas. 
 
 It would be desirable to apply Theorem \ref{mainantipodethm} to obtain a combinatorial interpretation for the coefficients of the form 
 shown in \eqref{displayfinalcoef}, and to apply Theorem \ref{mainantipodethm} to obtain a cancellation-free 
 expansion of $S(\lx_{(a, b)})$ in the $\lx$-basis. We leave these topics to a future study, and further research problems 
 concerning the $\lx$-basis are given in Section \ref{Conclusionsec} below. 
 
\section{Conclusion}\label{Conclusionsec}
 We conclude with open problems and topics for future research. To begin with, there are the natural questions regarding bases of 
 $\textsf{NSym}$ and $\textsf{QSym}$   and potential generalizations of results that apply to the immaculate and dual immaculate bases. Is 
 there a Jacobi--Trudi-like rule that the $\lx$-basis satisfies? How could the $\lx$-basis be constructed using analogues of the Bernstein 
 operators \cite{BergBergeronSaliolaSerranoZabrocki2014} used to construct the immaculate basis? How could a 
 Littlewood--Richardson-like product rule be determined for the $\lx$-basis? Letting $\chi\colon \textsf{NSym} \to \textsf{Sym}$ denote the usual projection morphism from $\textsf{NSym}$ and $\textsf{Sym}$, when 
 can the Schur-positivity of $\chi(\lx_{\alpha})$ be guaranteed? How can $\chi(\lx_{\alpha})$ be evaluated for a given 
 composition $\alpha$? 
 What is the expansion of a given Schur function into the dual lexical basis of $\textsf{QSym}$? Immaculate tableaux also relate closely to tabloids, so we may ask how the representation-theoretic applications of tabloids could be generalized using lexical tableaux, or other composition tableaux 
 analogues of tabloids. 
 
 Our definition of lexical tableaux and our construction of bases of $\textsf{QSym}$ and $\textsf{NSym}$ using such tableaux were based 
 on the analogue in Theorem \ref{theoremltstirling} of the summation identity in Theorem \ref{thm:snk} involving the number of standard 
 immaculate tableaux of a fixed height and of a fixed order. This summation identity was discovered experimentally using the OEIS, which 
 motivates further algebraic combinatorics-based explorations on summation identities involving expressions of the form $g^{\alpha}$. 
 In this direction, using the OEIS, we have experimentally discovered that \[ \sum_{\alpha \vDash
 n} g^{\alpha}\sum_{i} (-1)^{\alpha_i} = 
 \text{A000296}(n) = \sum_{k=0}^n (-1)^{n-k} \binom{n}{k}B(k) = B(n-1) - a(n-1), \] where $\text{A000296}$(n) counts number of set 
 partitions of $[n]$ without singletons, and that \[ \sum_{\alpha \vDash 
 n} g^{\alpha} \Bigg[ \sum_{i} \alpha_i (-1)^{\alpha_i+1} \Bigg] = 
 \text{A250105}(n) = n((-1)^{n-1} + \sum_{j=1}^{n-1} (-1)^{j-1}B(n-j-1)),\] for $n \geq 2$, where $\text{A250105}(n)$ counts the 
 number of partitions of $n$ with exactly one circular succession, and that \[ \sum_{\alpha \vDash 
 n} g^{\alpha} \ell(\alpha) 
 \sum_{i} \alpha_i (-1)^{\alpha_i +1} = \text{A052889}(n) = n B(n-1), \] where $\text{A052889}(n)$ is the number of rooted set partitions 
 of $[n]$. How could these experimentally discovered results be proved and applied by analogy with our results related to 
 Theorem \ref{theoremltstirling}? What similar relations might hold using standard lexical tableaux?
 
 \subsection*{Acknowledgements}
 The first author was supported through an NSERC Discovery Grant.

\end{document}